\documentclass[11pt,reqno]{amsart}
\usepackage{amsmath, amsfonts, amsthm, amssymb, color, cite, soul}
\usepackage{mathrsfs}
\usepackage{dsfont}
\usepackage{upref}
\usepackage{indentfirst}
\usepackage{appendix}
\usepackage{enumerate}
\usepackage[bookmarks,colorlinks,citecolor=blue,linkcolor=red]{hyperref}
\usepackage{bm}
\usepackage{graphicx}
\allowdisplaybreaks
\textwidth=15.0cm \textheight=21.0cm \hoffset=-1.1cm \voffset=-0.5cm

\numberwithin{equation}{section}
\numberwithin{figure}{section}
\newtheorem{theorem}{Theorem}[section]

\newtheorem{remark}{Remark}[section]
\newtheorem{lemma}{Lemma}[section]

\def\eps{\epsilon}

\author[Y.-Z. Chen]{Yazhou Chen}
\address{College of Mathematics and Physics, Beijing University of
Chemical Technology, Beijing 100029, China}
\email{chenyz@mail.buct.edu.cn}

\author[Q.-L. He]{Qiaolin He}
\address{College of Mathematics, Sichuan University, Sichuan 610065, China}
\email{qlhejenny@scu.edu.cn}

\author[X.-D. Shi]{Xiaoding Shi}
\address{College of Mathematics and Physics, Beijing University of
Chemical Technology, Beijing 100029, China}
\email{shixd@mail.buct.edu.cn}

\author[X.-P. Wang]{Xiaoping Wang}
\address{Department of Mathematics, Hong Kong University of
Science and Technology, Hong Kong, China}
\email{mawang@ust.hk}

\title[sharp interface limit for compressible non-isentropic phase-field model]
{sharp interface limit for compressible non-isentropic phase-field model}

\keywords{compressible, non-isothermal, Navier-Stokes equation, Allen-Cahn equation, sharp interface.}

\subjclass[2010]{35Q30, 76T30, 35C20}

\date{\today}

\begin{document}
\begin{abstract}
In this paper, the sharp interface limit for the compressible non-isentropic Navier-Stokes/Allen-Cahn system is derived by the method of matched asymptotic expansion.
We show that the leading order problem satisfies the compressible Navier-Stokes equations with the interface being a free boundary. We discuss two cases in terms of different phase field diffusion coefficients. One is $M_{\epsilon}=O(1)$ and $M_{\epsilon}=O(\frac{1}{\epsilon})$, where  $\epsilon$ is the interface thickness. We have observed that the velocity and the temperature of the compressible immiscible two-phase fluids continuously through the interface. There is a jump  for the tension tensor at the interface, this jump depends on the surface tension and the mean curvature of the interface. In particular, for the first case $M_{\epsilon}=O(1)$, no matter how the density changes through the interface, the velocity of the interface in the normal direction is the same as the normal velocity of the fluid along the interface. But for the second case $M_{\epsilon}=O(\frac{1}{\epsilon})$, This phenomenon cann't occur where the density passes continuously through the interface. In fact, on this part of the interface, the normal velocity of the interface is determined by the mean curvature of the interface, the velocity and the density of the compressible immiscible two-phase fluids. That's where the phase transition happens.
\end{abstract}

\maketitle
\section{Introduction}
\label{sec:The compressible phase-field model}
Understanding the geometry and distribution of the interface is very important for determining the immiscible two-phase flow.
The treatment of such two-phase flow's interface is derived from the idea of physicist J.D. Van der Waals  \cite{V1894}, who regarded the interface of immiscible two-phase flow as a region with a certain thickness. Mathematical models based on this idea are often called diffusion interface models, such as the famous Navier-Stokes/Allen-Cahn system, which can be used to study the immiscible two-phase flow, such as phase transformation, chemical reactions, etc., see\cite{AC1979}-\cite{K2012} and the references therein. In these literatures, by introducing diffusion interface instead of sharp interface, the authors overcome the difficulties caused by the boundary condition of interface.

For compressible immiscible two-phase flow, taking any one of the volume particles in the flow, we assume $M_i$ the mass of the components in the representative material volume $V$, $\phi_i=\frac{\rho_i}{\rho}$ the mass concentration, $\rho_i=\frac{M_i}{V}$  the apparent mass density of the fluid $i~(i=1,2)$. The total density is given by $\rho=\rho_1+\rho_2$ and $\phi=\phi_1-\phi_2$. We call $\phi$ the difference of the two
components for the fluid mixture. Obviously, $\phi$  describes the distribution of the interface. 
The compressible heat-conducting Navier-Stokes/Allen-Cahn system derived by Heida-M$\mathrm{\acute{a}}$lek-Rajagopal \cite{HMR2012} is as following
\begin{equation}\label{original NSAC}
\left\{\begin{array}{llll}
\displaystyle \partial_t\rho+\textrm{div}(\rho \mathbf{u})=0,\\
\displaystyle \partial_t(\rho \mathbf{u})+\mathrm{div}\big(\rho \mathbf{u}\otimes \mathbf{u}\big)=\mathrm{div}\mathbb{T},
  \\
\displaystyle\partial_t(\rho\phi)+\mathrm{div}\big(\rho\phi \mathbf{u}\big)=-M_{\epsilon}\mu,\\
\displaystyle\rho\mu=\rho\frac{\partial f}{\partial \phi}-\mathrm{div}\big(\rho\frac{\partial f}{\partial \nabla\phi}\big),\\
\displaystyle\partial_t(\rho E)+\mathrm{div}(\rho E\mathbf{u})=\mathrm{div}\big(\mathbb{T}\mathbf{u}+\mathrm{k}\nabla \theta-M_{\epsilon}\mu\frac{\partial f}{\partial \nabla\phi}\big),
\end{array}\right.
\end{equation}
where $ \mathbf{x}\in \Omega \subset \mathds{R}^N $, $N$ is  spatial dimension, $t>0$. The unknown functions $\rho(\mathbf{x},t)$, $\mathbf{u}(\mathbf{x},t)$, $\phi(\mathbf{x},t)$, $\theta(\mathbf{x},t)$  denote the total density, the velocity, the difference of the two components for the fluid mixture, and the absolute temperature, respectively. $\mu(\mathbf{x},t)$ is the chemical potential of the fluid. $\epsilon>0$ is the thickness of the diffuse interface. $\mathrm{k}>0$ is the coefficient of heat conduction. $M_{\epsilon}=M(\epsilon)>0$  is the  mobility coefficient.
The Cauchy stress-tensor is represented by
\begin{equation}\label{T}
\mathbb{T}=2\nu\mathbb{D}(\mathbf{u})+\lambda(\mathrm{div}\mathbf{u})\mathbb{I}-p\mathbb{I}-\rho\nabla\phi\otimes\frac{\partial f}{\partial\nabla\phi},
\end{equation}
where $\mathbb{D}\mathbf{u}$ is the  deformation tensor
\begin{equation}\label{D}
  \mathbb{D}\mathbf{u}=\frac{1}{2}\big(\nabla \mathbf{u} +\nabla^{\top} \mathbf{u}\big),
\end{equation}
and $\mathbb{I}$ is the unit matrix, $\top$ means the transpose of the matrix. $\nu>0,\lambda>0$ are  viscosity coefficients, satisfying
\begin{equation}\label{nu}
 \nu>0,\ \ \lambda+\frac{2}{N}\nu\geq0.
\end{equation}
The  total energy density $\rho E$ is given by
\begin{equation}\label{total energy density}
 \rho E=\rho \big(e+f+\frac{1}{2}\mathbf{u}^2\big),
\end{equation}
where $\rho e$ is the internal energy, $\frac{\rho\mathbf{u}^2}{2}$ is the kinetic energy,  $f$ is  the  fluid-fluid interfacial free energy density,  and it has the following form (refer to Heida-M$\mathrm{\acute{a}}$lek-Rajagopal \cite{HMR2012} and Lowengrub-Truskinovsky \cite{LT-1998}):
 \begin{equation}\label{free energy density case 2}
 f(\rho,\phi,\nabla\phi)\overset{\text{def}}{=}\frac{1}{4\epsilon\rho}(1-\phi^2)^2+\frac{\epsilon}{2\rho}|\nabla \phi|^2.
\end{equation}
Here $p=p(\rho,\theta)$, $e=e(\rho,\theta)$ and $f=f(\rho,\phi,\nabla\phi)$ obey the  second law of thermodynamics (see Lions \cite{Lions1998}),
\begin{equation}\label{second law of thermodynamics}
  ds=\frac{1}{\theta}\big(d(e+f)+pd(\frac{1}{\rho})\big),
\end{equation}
where $s$ is the thermodynamic entropy. Then deduced from \eqref{second law of thermodynamics}, we have
\begin{equation}\label{relation}
  \frac{\partial s}{\partial \theta}=\frac{1}{\theta}\frac{\partial (e+f)}{\partial \theta},\ \ \frac{\partial s}{\partial \rho}=\frac{1}{\theta}\big(\frac{\partial (e+f)}{\partial \rho}-\frac{p}{\rho^2}\big),
\end{equation}
 which implies the following compatibility equation
\begin{eqnarray}\label{pressure}
p&=&\rho^2\frac{\partial (e+f)}{\partial \rho}+\theta\frac{\partial p}{\partial \theta}\notag\\
&=&\rho^2\frac{\partial e(\rho,\theta)}{\partial \rho}-\frac{1}{4\epsilon}(\phi^2-1)^2-\frac\epsilon2|\nabla\phi|^2+\theta\frac{\partial p}{\partial \theta}.
\end{eqnarray}
Therefore, we have
\begin{equation}\label{NSAC}
\left\{\begin{array}{llll}
\displaystyle \partial_t\rho+\textrm{div}(\rho \mathbf{u})=0,\\
\displaystyle \rho \partial_t\mathbf{u}+\rho(\mathbf{u}\cdot\nabla)\mathbf{u}-2\nu\mathrm{div} \mathbb{D}\mathbf{u}-\lambda\nabla\mathrm{div}\mathbf{u}+\nabla p(\rho,\theta)=-\epsilon\mathrm{div}\big(\nabla\phi\otimes\nabla\phi\big), \\
\displaystyle \rho\partial_t\phi+\rho \mathbf{u}\nabla\phi=-M_\epsilon\mu,\\
\displaystyle \rho\mu=\frac{1}{\epsilon}(\phi^3-\phi)-\epsilon \Delta\phi,\\
\displaystyle e_\theta\big(\rho\partial_t\theta+\rho \mathbf{u}\cdot\nabla\theta\big)+\theta p_\theta\mathrm{div}\mathbf{u}-\mathrm{k} \Delta\theta=2\nu|\mathbb{D}\mathbf{u}|^2+\lambda(\mathrm{div}\mathbf{u})^2+M_{\epsilon}\mu^2.
\end{array}\right.
\end{equation}


About the study of compressible immiscible two-phase flow, most of the works focused on isentropic compressible problems.
Feireisl-Petzeltov$\acute{a}$-Rocca-Schimperna \cite{FPRS2010} established the global existence of finite energy weak solutions in 3-D by using the framework  which was introduced by Lions \cite{Lions1998}.  Ding-Li-Lou \cite{DLL2013}  proved the global existence of the strong solutions in 1-D with  large initial data.
Chen-Guo \cite{CG2017} generalized the result of \cite{DLL2013} to the case that the initial vacuum is allowed.
Abels-Liu \cite{AL2017}  proved the convergence of the solutions for the incompressible Stokes/Allen-Cahn system to solutions of a sharp interface model for sufficiently small times.
Witterstein \cite{W-2010} showed that the sharp-interface limit of the isentropic phase-field model is the standard two-phase compressible Navier-Stokes equations by the method of  asymptotic analysis.
Wang-Wang \cite{W-W-2007},  Xu-Di-Yu \cite{XDY2017} investigated the sharp-interface limits of the incompressible phase-field model with a generalized Navier slip boundary condition.
There is not much work for non-isentropic case. Kotschote \cite{K2012} obtained the local existence and uniqueness result for strong solutions in 3-D for compressible non-isothermal phase-field model.



The main purpose of this paper is to derive the sharp interface limit of the non-isentropic compressible system \eqref{NSAC} for two cases $(C1)$ and $(C2)$ by
\begin{align}
\displaystyle  (C1)\quad M_\epsilon =1,\quad & \quad (C2)\quad M_\epsilon=\frac{1}{\epsilon}.
\end{align}
The physical meaning of  the second case  $(C2)$ is that, the phase field mobility  rate is accelerated.
Before we introduce our main result, let us summarize the common symbols used in this paper.
For $\forall t>0$, suppose that unknown two-phase free interface is given by
 \begin{equation}\label{e:interface}
\Gamma(t):=\big\{\mathbf{x}\in\Omega\big| \phi_{\epsilon}(\mathbf{x},t)=0\big\}.
 \end{equation}
Obviously, $\Gamma(t)$ divides the whole domain $ \Omega $ into two separated domain $\Omega^-(t)$ and $\Omega^+(t) $ which represented the domains occupied by fluid 1 and  fluid 2 respectively, more precisely
 \begin{equation}\label{Omega+(t) and Omega-(t)}
 \Omega^-(t)=\big\{\mathbf{x}\in\Omega\big|\phi_{\epsilon}(\mathbf{x},t)<0\big\},\qquad\Omega^+(t)=\big\{\mathbf{x}\in\Omega\big|:\phi_{\epsilon}(\mathbf{x},t)>0\big\},\notag
 \end{equation}
and
\begin{equation}
 \Omega=\Omega^-(t)\cup\Gamma(t)\cup\Omega^+(t), \quad \forall t>0.\notag
\end{equation}
Therefore, we have the following definition
 \begin{equation}\label{Omegasign}
 \Omega^-=\big\{(\mathbf{x},t)\big|\mathbf{x}\in\Omega^-(t)\big\},\quad\Omega^+=\big\{(\mathbf{x},t)\big|:\mathbf{x}\in\Omega^+(t)\big\},\quad \Gamma=\big\{(\mathbf{x},t)\big|\mathbf{x}\in\Gamma(t)\big\}.
 \end{equation}
We use the method of matched asymptotic expansion to determine the sharp interface limit as $ \epsilon \rightarrow 0 $.
Unlike incompressible fluids, the density of compressible fluids varies. Especially for immiscible two-phase flow, the change of density may cause phase transition. To illustrate the mass density properties near the interface in more detail, we define the set $ S \subset \Gamma $ by
 \begin{equation}\label{S}
  S=\{(\mathbf{x},t)\in \Gamma| \rho^+(\mathbf{x},t)=\rho^-(\mathbf{x},t)\},
 \end{equation}
which implies that the jump of the density $\rho$ can't occur on this part of the interface $ \Gamma $.
Now, we give the main theorem of sharp interface limit.

\begin{theorem}\label{thm-main} We assume \eqref{e:interface}-\eqref{Omegasign}. Let $(\rho,\mathbf{u},\phi,\mu,\theta) $ be a solution of the Navier-Stokes-Allen-Cahn system \eqref{NSAC}. We assume that an outer asymptotic expansion, that is
\begin{align*}
&\rho=\rho_0+\eps \rho_1+\cdots,\quad \mathbf{u}=\mathbf{u}_0+\eps \mathbf{u}_1+\cdots,\quad
\phi=\phi_0+\eps \phi_1+\cdots,\\
&\mu=\eps^{-1}\mu_0+\mu_1+\cdots,\quad \theta =\theta_0+\eps \theta_1+\cdots,
\end{align*}
and an inner asymptotic expansion, that is
\begin{align*}
&\rho=\widetilde{\rho}_0+\eps \widetilde{\rho_1}+\cdots,\quad \mathbf{u}=\widetilde{\mathbf{u}}_0+\eps \widetilde{\mathbf{u}}_1+\cdots,\quad \phi=\widetilde{\phi}_0+\eps \widetilde{\phi}_1+\cdots,\\
&\widetilde{\mu}=\eps^{-1}\widetilde{\mu}_0+\widetilde{\mu}_1+\cdots,\quad \widetilde{\theta} =\widetilde{\theta}_0+\eps \widetilde{\theta}_1+\cdots,
\end{align*}
for $ \rho, \mathbf{u}, \phi, \mu, \theta $. We suppose that $ \rho_0>0 $ and $ \widetilde{\rho}_0>0 $. Then as $ \eps \rightarrow 0$, the system \eqref{NSAC} converges to the sharp interface problem
\begin{equation}\label{free boundary problem for NSAC}
\left\{\begin{array}{llll}
\displaystyle  \partial_t \rho + \mathrm{div}(\rho  \mathbf{u} )=0,& \mathrm{in}\ \Omega^\pm,\\
\displaystyle\rho \partial_t\mathbf{u}+\rho (\mathbf{u}\cdot \nabla)\mathbf{u}-2\nu\mathrm{div} \mathbb{D}\mathbf{u}-\lambda\nabla\mathrm{div}\mathbf{u}+\nabla\big( \rho^2\frac{\partial e}{\partial \rho}+\theta\frac{\partial p}{\partial \theta}\big)=0,& \mathrm{in}\ \Omega^\pm,\\
e_\theta\big(\rho\partial_t\theta+\rho \mathbf{u}\cdot\nabla\theta\big)+\theta p_\theta\mathrm{div}\mathbf{u}-\mathrm{k} \Delta \theta=2\nu|\mathbb{D}\mathbf{u}|^2+\lambda(\mathrm{div}\mathbf{u})^2,& \mathrm{in}\ \Omega^\pm,\\
\displaystyle \phi=\pm1, & \mathrm{in}\ \Omega^\pm,
\end{array}\right.
\end{equation}
with the jump conditions
\begin{equation}\label{jump condition-case-1}
 (C1)\quad M_\epsilon=1:\ \left\{\begin{array}{llll}
 \displaystyle [\mathbf{u}]_\Gamma=0,\quad [\theta]_\Gamma=0, & \mathrm{on}~~\Gamma,\\
 \displaystyle V_{\mathbf{n}}-\mathbf{u} \cdot \mathbf{n}  =0,& \mathrm{on}~~\Gamma,
 \\
 \displaystyle \big[\big(2\nu\mathbb{D}\mathbf{u}+\lambda\mathrm{div}\mathbf{u}\mathbb{I}-\big( \rho^2\frac{\partial e}{\partial \rho}+\theta\frac{\partial p}{\partial \theta}\big)\mathbb{I}\big)\mathbf{n}\big]_\Gamma=\sigma \kappa \mathbf{n},& \mathrm{on}~~\Gamma,
\end{array}
\right.
\end{equation}
\begin{equation}\label{jump condition-case-2}
 \ (C2)\quad M_\epsilon=\frac{1}{\epsilon}:\ \left\{\begin{array}{llll}
 \displaystyle [\mathbf{u}]_\Gamma=0,\quad [\theta]_\Gamma=0,& \mathrm{on}~~\Gamma,\\
 \displaystyle [\rho]_\Gamma =0,\quad \rho^2(V_\mathbf{n}-\mathbf{u} \cdot \mathbf{n}) =-\kappa,& \mathrm{on}~~S,
 \\
 \displaystyle V_{\mathbf{n}}-\mathbf{u} \cdot \mathbf{n} =0=\kappa,& \mathrm{on}~~\Gamma\setminus S,
 \\
 \displaystyle \big[\big(2\nu\mathbb{D}\mathbf{u}+\lambda\mathrm{div}\mathbf{u}\mathbb{I}-\big( \rho^2\frac{\partial e}{\partial \rho}+\theta\frac{\partial p}{\partial \theta}\big)\mathbb{I}\big)\mathbf{n}\big]_\Gamma=\sigma \kappa \mathbf{n},& \mathrm{on}~~\Gamma,
\end{array}
\right.
\end{equation}
on the interface $ \Gamma $, where $\Gamma$ is defined as \eqref{Omegasign} and $S$ defined as \eqref{S}. $ [\cdot]_\Gamma=\cdot^+-\cdot^-$ denotes the jump of limiting values across the interface, $\sigma$ is  the constant coefficient of surface tension (see \eqref{sigma0}), $\kappa $ the curvature of the interface $ \Gamma $, $\mathbf{n}$ the unit normal of the interface pointing to $\Omega^+$, $ V_\mathbf{n} $ denotes the normal velocity of the interface $ \Gamma $.
\end{theorem}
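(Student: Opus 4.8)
The plan is a formal matched asymptotic expansion; no rigorous convergence is claimed. First I would work in a tubular neighbourhood of $\Gamma$ using the signed distance function $d(\mathbf{x},t)$ (normalized so $|\nabla d|=1$, $d<0$ in $\Omega^-$), introduce the stretched normal variable $z=d(\mathbf{x},t)/\eps$, and record the operator expansions $\nabla\mapsto\eps^{-1}\mathbf{n}\,\partial_z+\nabla_\Gamma$, $\partial_t\mapsto-\eps^{-1}V_{\mathbf{n}}\,\partial_z+\partial_t|_\Gamma$, $\Delta\mapsto\eps^{-2}\partial_{zz}+\eps^{-1}(\Delta d)\,\partial_z+\cdots$ with $\Delta d|_\Gamma=\kappa$, together with the matching requirement that the inner profiles and their $z$-derivatives agree with the one-sided limits of the outer fields and their normal derivatives as $z\to\pm\infty$. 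For the outer problem I would substitute the outer ansatz into \eqref{NSAC} and collect powers of $\eps$: the leading order of $\rho\mu=\eps^{-1}(\phi^3-\phi)-\eps\Delta\phi$ gives $\phi_0^3-\phi_0=0$, and, discarding the unstable root, $\phi_0=\pm1$ in $\Omega^\pm$, which is the last equation of \eqref{free boundary problem for NSAC}; since $\nabla\phi_0=0$, the capillary stress and the $\phi$-dependent terms of the pressure \eqref{pressure} drop out (leaving $p_0=\rho_0^2\frac{\partial e}{\partial\rho}+\theta_0\frac{\partial p}{\partial\theta}$), the outer Allen--Cahn equation forces $\mu\equiv0$, and $M_\eps\mu^2\equiv0$ in both cases, so the surviving leading-order equations are precisely the bulk system of \eqref{free boundary problem for NSAC}.

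For the inner problem I would substitute the inner ansatz and the operator expansions (writing $\mathbb{T}=\eps^{-1}\widetilde{\mathbb{T}}_{-1}+\widetilde{\mathbb{T}}_0+\cdots$ for the inner stress). The $O(\eps^{-1})$ continuity equation gives $\partial_z\big(\widetilde{\rho}_0(\widetilde{\mathbf{u}}_0\cdot\mathbf{n}-V_{\mathbf{n}})\big)=0$, so the mass flux $m:=\widetilde{\rho}_0(\widetilde{\mathbf{u}}_0\cdot\mathbf{n}-V_{\mathbf{n}})$ is independent of $z$. The $O(\eps^{-2})$ momentum equation has vanishing left-hand side, and since \eqref{pressure} makes the pressure $O(\eps^{-1})$ inside the layer with leading part $\widetilde{p}_{-1}=-\tfrac14(\widetilde{\phi}_0^2-1)^2-\tfrac12(\partial_z\widetilde{\phi}_0)^2$, matching forces $\widetilde{\mathbb{T}}_{-1}\mathbf{n}\equiv0$, which splits into $\partial_z\widetilde{\mathbf{u}}_0^{\,\tau}=0$ and $(2\nu+\lambda)\partial_z(\widetilde{\mathbf{u}}_0\cdot\mathbf{n})=\widetilde{p}_{-1}+(\partial_z\widetilde{\phi}_0)^2$. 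The $O(\eps^{-1})$ chemical-potential relation reads $\widetilde{\rho}_0\widetilde{\mu}_0=(\widetilde{\phi}_0^3-\widetilde{\phi}_0)-\partial_{zz}\widetilde{\phi}_0$, while the leading order of the Allen--Cahn equation gives $\widetilde{\mu}_0=-m\,\partial_z\widetilde{\phi}_0$ in case $(C1)$ and $\widetilde{\mu}_0=0$ in case $(C2)$. In $(C2)$ this is already the profile equation $\partial_{zz}\widetilde{\phi}_0=\widetilde{\phi}_0^3-\widetilde{\phi}_0$; in $(C1)$ it gives $\partial_{zz}\widetilde{\phi}_0-(\widetilde{\phi}_0^3-\widetilde{\phi}_0)=m\widetilde{\rho}_0\,\partial_z\widetilde{\phi}_0$, and multiplying by $\partial_z\widetilde{\phi}_0$ and integrating over $z\in\mathbb{R}$ (the left side integrating to zero by $\widetilde{\phi}_0(\pm\infty)=\pm1$, $\partial_z\widetilde{\phi}_0(\pm\infty)=0$) forces $m\int_{\mathbb{R}}\widetilde{\rho}_0(\partial_z\widetilde{\phi}_0)^2\,dz=0$, hence $m=0$ and the same profile equation. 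In either case $\widetilde{\phi}_0(z)=\tanh(z/\sqrt2)$ and the equipartition identity $\tfrac12(\partial_z\widetilde{\phi}_0)^2=\tfrac14(\widetilde{\phi}_0^2-1)^2$ holds, so $\widetilde{p}_{-1}=-(\partial_z\widetilde{\phi}_0)^2$ and therefore $\partial_z(\widetilde{\mathbf{u}}_0\cdot\mathbf{n})=0$; combined with the analogous leading-order inner energy equation $\partial_{zz}\widetilde{\theta}_0=0$, both $\widetilde{\mathbf{u}}_0$ and $\widetilde{\theta}_0$ are $z$-independent, and matching yields $[\mathbf{u}]_\Gamma=0$ and $[\theta]_\Gamma=0$.

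Next I would extract the kinematic conditions. In case $(C1)$, $m=0$ and $\widetilde{\rho}_0>0$ force $\widetilde{\mathbf{u}}_0\cdot\mathbf{n}=V_{\mathbf{n}}$, i.e.\ $V_{\mathbf{n}}-\mathbf{u}\cdot\mathbf{n}=0$ on $\Gamma$, which completes \eqref{jump condition-case-1}. In case $(C2)$ the extra condition appears one order later: the $O(\eps^{-1})$ Allen--Cahn equation gives $\widetilde{\mu}_1=-m\,\partial_z\widetilde{\phi}_0$, and the $O(\eps^0)$ chemical-potential relation becomes $\mathcal{L}\widetilde{\phi}_1=(-\kappa+m\widetilde{\rho}_0)\,\partial_z\widetilde{\phi}_0$ for the self-adjoint operator $\mathcal{L}:=\partial_{zz}-(3\widetilde{\phi}_0^2-1)$, whose kernel is spanned by $\partial_z\widetilde{\phi}_0$; the Fredholm solvability condition (orthogonality of the right-hand side to $\partial_z\widetilde{\phi}_0$) yields $\kappa\int_{\mathbb{R}}(\partial_z\widetilde{\phi}_0)^2\,dz=m\int_{\mathbb{R}}\widetilde{\rho}_0(\partial_z\widetilde{\phi}_0)^2\,dz$. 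If $m\neq0$ then, since $\widetilde{\mathbf{u}}_0\cdot\mathbf{n}$ and $V_{\mathbf{n}}$ are $z$-independent, $\widetilde{\rho}_0=m/(\widetilde{\mathbf{u}}_0\cdot\mathbf{n}-V_{\mathbf{n}})$ is $z$-independent, so $\rho^+=\rho^-$, the point lies on $S$, and the identity collapses to $\rho^2(V_{\mathbf{n}}-\mathbf{u}\cdot\mathbf{n})=-\kappa$; if $m=0$ then $\widetilde{\mathbf{u}}_0\cdot\mathbf{n}=V_{\mathbf{n}}$ and the identity gives $\kappa=0$, which in particular holds throughout $\Gamma\setminus S$ because there $\rho^+\neq\rho^-$ already forces $m=0$. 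This reproduces \eqref{jump condition-case-2}.

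Finally, for the stress jump---identical in both cases, since $M_\eps$ does not enter the momentum equation---I would use the $O(\eps^{-1})$ momentum equation: its left-hand side vanishes because $\partial_z\widetilde{\mathbf{u}}_0=0$, and with $\widetilde{p}_{-1}=-(\partial_z\widetilde{\phi}_0)^2$ the tensor $\widetilde{\mathbb{T}}_{-1}=(\partial_z\widetilde{\phi}_0)^2(\mathbb{I}-\mathbf{n}\otimes\mathbf{n})$ is a purely tangential surface stress with $\mathrm{div}\,\widetilde{\mathbb{T}}_{-1}=-(\partial_z\widetilde{\phi}_0)^2\kappa\,\mathbf{n}$; collecting the $O(\eps^{-1})$ terms then gives $\partial_z(\widetilde{\mathbb{T}}_0\mathbf{n})=(\partial_z\widetilde{\phi}_0)^2\kappa\,\mathbf{n}$, and integrating over $z\in\mathbb{R}$ and matching $\widetilde{\mathbb{T}}_0\mathbf{n}$ with the one-sided limits of the outer stress $2\nu\mathbb{D}\mathbf{u}_0+\lambda(\mathrm{div}\,\mathbf{u}_0)\mathbb{I}-\big(\rho_0^2\frac{\partial e}{\partial\rho}+\theta_0\frac{\partial p}{\partial\theta}\big)\mathbb{I}$ gives $[\mathbb{T}_0\mathbf{n}]_\Gamma=\sigma\kappa\,\mathbf{n}$ with $\sigma=\int_{\mathbb{R}}(\partial_z\widetilde{\phi}_0)^2\,dz$, the constant recorded in \eqref{sigma0}. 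The hard part will be organizing the inner expansion to the right order so that all curvature and tangential contributions are tracked consistently---in particular the $O(\eps^{-2})$ cancellation between the Korteweg pressure and the capillary stress---and carrying out the two solvability arguments cleanly: the one forcing $m=0$ in $(C1)$, and the more delicate $O(\eps^0)$ one in $(C2)$, together with the dichotomy $m\neq0\Rightarrow(\mathbf{x},t)\in S$ versus $\Gamma\setminus S\Rightarrow m=0,\ \kappa=0$ that yields the phase-transition law on $S$.
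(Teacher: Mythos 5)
Your proposal is correct and follows the same matched-asymptotics skeleton as the paper (same expansions, same orders collected, same limit system and jump conditions), but two internal steps are handled by genuinely different arguments. First, the kinematic conditions: the paper's Lemma \ref{lem-rho-u-phi} proves \eqref{vn-u} through the combined dissipation identity \eqref{uu-phi-integ}, obtained by coupling the normal momentum balance \eqref{uu-f-chi} (multiplied by $\partial_\xi(1/\widetilde{\rho}_0^{\,2})$ and fed back through the continuity equation) with the phase-field relation \eqref{lem3-2}; the sign of the integrand then forces $V_\mathbf{n}-\widetilde{\mathbf{u}}_0\cdot\mathbf{n}=0$ in case $(C1)$ and $\partial_\xi\widetilde{\rho}_0\,(V_\mathbf{n}-\widetilde{\mathbf{u}}_0\cdot\mathbf{n})=0$ in case $(C2)$. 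You instead get $m=0$ in $(C1)$ purely from \eqref{phi-inner11}--\eqref{mu-inner11} by multiplying the combined profile equation by $\partial_\xi\widetilde{\phi}_0$ and integrating (a Fredholm-type solvability step that avoids the viscous term altogether), and you recover the $(C2)$ dichotomy not from \eqref{vn-u} but from constancy of the mass flux together with the $\xi$-independence of $\widetilde{\mathbf{u}}_0\cdot\mathbf{n}$ and $V_\mathbf{n}$, which makes $\widetilde{\rho}_0$ constant across the layer whenever $m\neq 0$ and hence places the point on $S$; this is a slightly more elementary route, while the paper's identity packages both cases in one formula. Second, the stress jump: the paper integrates the componentwise first-order momentum equation \eqref{uu-inner3} and matches via \eqref{newmatch}, whereas you organize the same computation through the leading inner stress $\widetilde{\mathbb{T}}_{-1}=|\partial_\xi\widetilde{\phi}_0|^2(\mathbb{I}-\mathbf{n}\otimes\mathbf{n})$ and its divergence $-|\partial_\xi\widetilde{\phi}_0|^2\kappa\,\mathbf{n}$, which is equivalent but makes the surface-tension structure and the $O(\epsilon^{-2})$ cancellation between the Korteweg stress and the Ginzburg--Landau part of the pressure \eqref{pressure} more transparent. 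Your $O(\epsilon^{0})$ solvability argument in $(C2)$ is exactly the computation behind the paper's Lemma \ref{lem-c1c2}, and the remaining conclusions ($\widetilde{\phi}_0=\tanh(\xi/\sqrt2)$, $\partial_\xi\widetilde{\mathbf{u}}_0=\partial_\xi\widetilde{\theta}_0=0$, $\sigma=\int_{\mathbb{R}}|\partial_\xi\widetilde{\phi}_0|^2d\xi$) coincide with Lemmas \ref{lem-rho-u-phi}--\ref{c2-jump}; the only cosmetic omission is noting that on points of $S$ with $m=0$ the relation $\rho^2(V_\mathbf{n}-\mathbf{u}\cdot\mathbf{n})=-\kappa$ holds trivially since both sides vanish.
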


\begin{remark}
Theorem 1.1 shows that the leading order problem satisfies the compressible Navier-Stokes equations with the interface being a free boundary.
Whether the diffusion coefficient $M_{\epsilon}=O(1)$ or $M_{\epsilon}=O(\frac{1}{\epsilon})$, the velocity and the temperature of the flow continuously through the interface. There is a jump  for the tension tensor at the interface, and this jump depends on the surface tension and the mean curvature of the interface. In particular, for the first case $M_{\epsilon}=O(1)$, no matter how the density changes through the interface, the velocity of the interface in the normal direction is the same as the normal velocity of the fluid along the interface. But for the second case $M_{\epsilon}=O(\frac{1}{\epsilon})$, this phenomenon can only occur at the part of the interface where the density is discontinuous, and the mean curvature for this part of the interface is zero. In other parts of the interface, the normal velocity of the interface is determined by the velocity, density of the fluid and the curvature of the surface, and that's where the phase transition happens.
\end{remark}



\section{Asymptotic expansion for non-isentropic compressible phase-field model} \label{sec:expansion}
In this section we consider the asymptotic analysis for \eqref{NSAC} for two cases $(C1)$ and $(C2)$.
The analysis process is as follows, firstly we consider the outer expansions far from the interface $ \Gamma $, and then we do the inner expansions near the interface $ \Gamma $, and finally we combine them together to obtain the sharp interface limit of the system \eqref{NSAC} in $ \Omega $.
To make the presentation in this section clear, we use $\rho_{\epsilon}$, $\mathbf{u}_{\epsilon}$, $\phi_{\epsilon}$, $\mu_{\epsilon}$ and $\theta_{\epsilon}$ instead of $\rho$, $\mathbf{u}$, $\phi$, $\mu$ and $\theta$ in the system \eqref{NSAC}, to show explicitly that these functions depend on $\epsilon$.
\subsection{Outer expansion} 
Far from the two-phase interface $\Gamma$, we use the following ansatz,
\begin{equation}\label{outer-e}
\begin{array}{l}
\displaystyle\rho_{\epsilon}^{\pm}=\rho_0^{\pm}+\epsilon\rho_1^{\pm}+\epsilon^2 \rho_2^{\pm}+\cdots,\\
\displaystyle\mathbf{u}_{\epsilon}^{\pm}=\mathbf{u}_0^{\pm}+\epsilon\mathbf{u}_1^{\pm}+\epsilon^2 \mathbf {u}_2^{\pm}+\cdots,\\
\displaystyle\phi_{\epsilon}^{\pm}=\phi_0^{\pm}+\epsilon\phi_1^{\pm}+\epsilon^2 \phi_2^{\pm}+\cdots,\\
\displaystyle\mu_{\epsilon}^{\pm}=\eps^{-1}\mu_0^{\pm}+\mu_1^{\pm}+\epsilon \mu_2^{\pm}+\cdots,\\
\displaystyle\theta_{\epsilon}^{\pm}=\theta_0^{\pm}+\epsilon\theta_1^{\pm}+\epsilon^2 \theta_2^{\pm}+\cdots.
\end{array}
\end{equation}
Here $g^{\pm}$ denotes the restriction of a function $g$ in $\Omega^+$ and $\Omega^-$ respectively. Since $\rho >0, \theta>0$, we presume that $\rho_0^{\pm}>0, \theta>0$.

Combined with the above analysis, let us first plug ansatz \eqref{outer-e} in the mass conservation equation \eqref{NSAC}$_1$ and compare the coefficients of terms with the same power of $ \epsilon$, and we achieve on the lowest order
\begin{align}
\displaystyle \mathcal{O}(1): \quad \partial_t \rho_0^{\pm} + \textrm{div}(\rho_0^{\pm}  \mathbf{u}_0^{\pm} )=0. \label{rho-outer}
\end{align}
Similarly, plugging ansatz \eqref{outer-e} in the momentum conservation equation \eqref{NSAC}$_2$, we obtain
\begin{align}
\displaystyle \mathcal{O}(\epsilon^{-1}):& \quad \nabla(\frac{1}{4}(1-(\phi_0^\pm)^2)^2)=0, \label{uu-outer1}\\
\displaystyle \mathcal{O}(1):& \quad \rho_0^{\pm} \partial_t\mathbf{u}_0^{\pm}+\rho_0^{\pm} (\mathbf{u}_0^{\pm}\cdot \nabla) \mathbf{u}_0^{\pm}-\nabla (((\phi_0^\pm)^3-\phi_0^\pm)\phi_1^{\pm})=\textrm{div}\mathbb{S}_0^{\pm}-\nabla p_0^{\pm}, \label{uu-outer2}
\end{align}
where
\begin{equation}\label{v-stress-outer}
  \mathbb{S}_0^{\pm}=2\nu\mathbb{D}\mathbf{u}_0^{\pm}+\lambda\mathrm{div}\mathbf{u}_0^{\pm}\mathbb{I},
\end{equation}
and
\begin{equation}\label{pressure-outer}
   p_0^{\pm}=(\rho_0^{\pm})^2e_\rho(\rho_0^{\pm},\theta_0^{\pm})+\theta_0^{\pm}p_\theta(\rho_0^{\pm},\theta_0^{\pm}).
\end{equation}
Next, we plug ansatz \eqref{outer-e} in conservation equation of phase field \eqref{NSAC}$_{3}$ to achieve
\begin{align}
\displaystyle \mathcal{O}(\epsilon^{\chi_{M_\epsilon}-2}):& \quad \mu_0^{\pm}=0, \label{phi-outer1}\\
\displaystyle \mathcal{O}(\epsilon^{\chi_{M_\epsilon}-1}):& \quad \chi_{M_\epsilon}(\rho_0^{\pm}\partial_t\phi_0^{\pm}+\rho_0^{\pm} \mathbf{u}_0^{\pm}\cdot
\nabla\phi_0^{\pm}) =-\mu_1^{\pm}, \label{phi-outer2}
\end{align}
where
\begin{equation}\label{chi-eta}
\chi_{M_\epsilon}=
\left\{\begin{array}{llll}
\displaystyle  1,&\ \mathrm{for~~(C1)}\ M_\epsilon=1, \\
\displaystyle 0,&\ \mathrm{for~~(C2)}\ M_\epsilon=\frac{1}{\epsilon}.
\end{array}\right.
\end{equation}
Plugging ansatz \eqref{outer-e} in potential equation \eqref{NSAC}$_4$ and comparing the coefficients of terms with the same power of $ \epsilon$, we obtain
\begin{align}
\displaystyle \mathcal{O}(\eps^{-1}): \quad & \rho_0^{\pm}\mu_0^{\pm}=\phi_0^\pm\big((\phi_0^\pm)^2-1\big).\label{mu-outer1}
\end{align}
Finally, plugging ansatz \eqref{outer-e} in energy equation \eqref{NSAC}$_5$, we obtain
\begin{align}
& \displaystyle \mathcal{O}(1):  e_\theta(\rho_0^{\pm},\theta_0^{\pm})(\rho_0^{\pm}\partial_t\theta_0^{\pm}+\rho_0^{\pm}\mathbf{u}_0^{\pm} \cdot\nabla \theta_0^{\pm})+\theta_0^{\pm}p_\theta(\rho_0^{\pm},\theta_0^{\pm})\mathrm{div}\mathbf{u}_0^{\pm}-\mathrm{k} \Delta \theta_0^{\pm}\nonumber\\
& =2 \nu|\mathbb{D}\mathbf{u}_0^{\pm}|^2\!+\lambda(\mathrm{div}\mathbf{u}_0^{\pm})^2\!+\chi_{M_\epsilon}\big((\mu_1^{\pm})^2\!+2\mu_0^{\pm}\mu_2^{\pm}\big)\!+(1\!-\!\chi_{M_\epsilon})\big(2\mu_1^{\pm}\mu_2^{\pm}\!+\!2\mu_0^{\pm}\mu_3^{\pm}\big),\label{theta-outer1}
\end{align}
where $\chi_{M_\epsilon}$ is defined in \eqref{chi-eta}.
Then from the equation \eqref{uu-outer1}, \eqref{phi-outer1}, \eqref{phi-outer2} and \eqref{mu-outer1}, we have
\begin{align}\label{phi0}
\displaystyle  \phi_0^{\pm}=\pm 1,\quad \mu_0^{\pm}=0,\quad \mu_1^{\pm}=0, \qquad \text{in}~~\Omega^\pm.
\end{align}
Combining \eqref{uu-outer2} and \eqref{mu-outer1}, it follows that
\begin{align}\label{uu-outer}
\displaystyle  \rho_0^{\pm}\partial_t \mathbf{u}_0^{\pm}+\rho_0^{\pm} (\mathbf{u}_0^{\pm} \cdot \nabla) \mathbf{u}_0^{\pm}+\nabla p_0^\pm=\textrm{div}\mathbb{S}_0^{\pm}.
\end{align}
Moreover, \eqref{theta-outer1} and \eqref{phi0} derive that
\begin{align}
\displaystyle &  e_\theta(\rho_0^{\pm},\theta_0^{\pm})(\rho_0^{\pm}\partial_t\theta_0^{\pm}+\rho_0^{\pm}\mathbf{u}_0^{\pm} \cdot\nabla \theta_0^{\pm})+\theta_0^{\pm}p_\theta(\rho_0^{\pm},\theta_0^{\pm})\mathrm{div}\mathbf{u}_0^{\pm}-\mathrm{k} \Delta \theta_0^{\pm}\nonumber\\
& =2 \nu|\mathbb{D}\mathbf{u}_0^{\pm}|^2+\lambda(\mathrm{div}\mathbf{u}_0^{\pm})^2. \label{theta-outer}
\end{align}
By using \eqref{rho-outer}, \eqref{phi0}, \eqref{uu-outer} and \eqref{theta-outer}, we can state the following lemma.
\begin{lemma}\label{lem-outer}
Assume $ \rho_0^\pm>0 $. Letting $ \epsilon\rightarrow 0 $ in ansatz \eqref{outer-e}, we derive
from \eqref{rho-outer}, \eqref{uu-outer2}, \eqref{phi-outer1}, \eqref{phi-outer2}, \eqref{mu-outer1} and \eqref{theta-outer1}, in the regions outside the transition layer, to the problem
\begin{align}
&  \partial_t \rho^{\pm} + \mathrm{div}(\rho^{\pm}  \mathbf{u}^{\pm} )=0,& \mathrm{in}\ \Omega^\pm,\label{rho-inter}\\
&  \rho^{\pm}\partial_t \mathbf{u}^{\pm}+\rho^{\pm}(\mathbf{u}^{\pm}\cdot\nabla) \mathbf{u}^{\pm}-\mathrm{div} (2\nu\mathbb{D}\mathbf{u}^{\pm}+\lambda\mathrm{div}\mathbf{u}^{\pm}\mathbb{I})\notag\\
&\qquad+\nabla \big((\rho^\pm)^2(e_\rho(\rho^\pm,\theta^\pm)+\theta^\pm p_\theta(\rho^\pm,\theta^\pm)\big)=0,& \mathrm{in}\ \Omega^\pm, \label{uu-inter}\\
& e_\theta(\rho^{\pm},\theta^{\pm})(\rho^{\pm}\partial_t\theta^{\pm}+\rho^{\pm}\mathbf{u}^{\pm} \cdot\nabla \theta^{\pm})+\theta^{\pm}p_\theta(\rho^{\pm},\theta^{\pm})\mathrm{div}\mathbf{u}^{\pm}-\mathrm{k} \Delta \theta^{\pm}\nonumber\\
&\quad =2 \nu|\mathbb{D}\mathbf{u}^{\pm}|^2+\lambda(\mathrm{div}\mathbf{u}^{\pm})^2 \label{theta-inter},& \mathrm{in}\ \Omega^\pm,
\end{align}
and $\phi_0^{\pm}=\pm 1$ in $ \Omega^\pm $.
\end{lemma}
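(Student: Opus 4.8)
The plan is to insert the outer ansatz \eqref{outer-e} into the five equations of \eqref{NSAC} (with the $\epsilon$-dependent unknowns $\rho_\epsilon,\mathbf{u}_\epsilon,\phi_\epsilon,\mu_\epsilon,\theta_\epsilon$), expand every coefficient function $e$, $p$, $e_\theta$, $p_\theta$, $\mathbb{D}\mathbf{u}$, etc. as a formal Taylor series in $\epsilon$ about its leading term, and then match powers of $\epsilon$. Only the lowest-order balances are needed; these are precisely the displayed identities \eqref{rho-outer}, \eqref{uu-outer1}, \eqref{uu-outer2}, \eqref{phi-outer1}, \eqref{phi-outer2}, \eqref{mu-outer1} and \eqref{theta-outer1} derived just before the statement, so the proof amounts to combining them and passing to the limit.

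First, the $\mathcal{O}(1)$ balance in the continuity equation \eqref{NSAC}$_1$ is \eqref{rho-outer}. The $\mathcal{O}(\epsilon^{-1})$ balance in the momentum equation \eqref{NSAC}$_2$ gives \eqref{uu-outer1}, i.e. $\nabla\big(\tfrac14(1-(\phi_0^\pm)^2)^2\big)=0$, and the $\mathcal{O}(\epsilon^{-1})$ balance in the chemical-potential relation \eqref{NSAC}$_4$ gives \eqref{mu-outer1}, namely $\rho_0^\pm\mu_0^\pm=\phi_0^\pm((\phi_0^\pm)^2-1)$. In the phase-field transport equation \eqref{NSAC}$_3$ the leading power of $\epsilon$ depends on the case through $\chi_{M_\epsilon}$ from \eqref{chi-eta}: matching the two lowest orders yields $\mu_0^\pm=0$ as in \eqref{phi-outer1} and, together with \eqref{phi-outer2}, also $\mu_1^\pm=0$ once $\phi_0^\pm$ is known to be constant. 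Using $\mu_0^\pm=0$, the hypothesis $\rho_0^\pm>0$, and \eqref{mu-outer1} we get $\phi_0^\pm\big((\phi_0^\pm)^2-1\big)=0$, hence $\phi_0^\pm\in\{-1,0,1\}$; the sign convention built into the definitions in \eqref{Omegasign} — $\phi_\epsilon>0$ on $\Omega^+$ and $\phi_\epsilon<0$ on $\Omega^-$ away from $\Gamma$ — excludes the branch $\phi_0^\pm\equiv 0$, so $\phi_0^\pm=\pm1$ in $\Omega^\pm$. Feeding this back into \eqref{phi-outer2} makes its left-hand side vanish, confirming $\mu_1^\pm=0$, which is \eqref{phi0}.

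With \eqref{phi0} established the remaining two $\mathcal{O}(1)$ balances collapse. In \eqref{uu-outer2} the term $\nabla\big(((\phi_0^\pm)^3-\phi_0^\pm)\phi_1^\pm\big)$ drops because $(\phi_0^\pm)^3-\phi_0^\pm=0$, and with $p_0^\pm$ given by \eqref{pressure-outer} one is left with \eqref{uu-outer}. In the energy balance \eqref{theta-outer1} every source term on the right-hand side carries a factor $\mu_0^\pm$ or $\mu_1^\pm$: in case (C1) the combination is $(\mu_1^\pm)^2+2\mu_0^\pm\mu_2^\pm$ and in case (C2) it is $2\mu_1^\pm\mu_2^\pm+2\mu_0^\pm\mu_3^\pm$, both of which vanish since $\mu_0^\pm=\mu_1^\pm=0$; this leaves \eqref{theta-outer}. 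Finally, letting $\epsilon\to0$ in \eqref{outer-e} and writing $\rho^\pm:=\rho_0^\pm$, $\mathbf{u}^\pm:=\mathbf{u}_0^\pm$, $\theta^\pm:=\theta_0^\pm$, the equations \eqref{rho-outer}, \eqref{uu-outer} and \eqref{theta-outer} become \eqref{rho-inter}, \eqref{uu-inter} and \eqref{theta-inter} respectively, together with $\phi_0^\pm=\pm1$ in $\Omega^\pm$, which is the assertion of the lemma.

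I expect the only non-mechanical step to be the reduction of the cubic equation $\phi_0^\pm\big((\phi_0^\pm)^2-1\big)=0$ to $\phi_0^\pm=\pm1$: this requires invoking the topological information encoded in the definition \eqref{Omegasign} of $\Omega^\pm$ (and the standing assumption that the outer expansion is valid strictly outside the transition layer around $\Gamma$) in order to discard the spurious root $\phi_0^\pm\equiv 0$. Everything else is routine power counting in $\epsilon$, and the distinction between cases (C1) and (C2) enters only through which order of the phase-field equation produces the identities $\mu_0^\pm=0$ and $\mu_1^\pm=0$ — it does not affect the limiting system \eqref{rho-inter}–\eqref{theta-inter}.
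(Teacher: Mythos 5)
Your proposal is correct and follows essentially the same route as the paper: plug the outer ansatz into \eqref{NSAC}, match powers of $\epsilon$ to obtain \eqref{rho-outer}--\eqref{theta-outer1}, deduce $\phi_0^\pm=\pm1$ and $\mu_0^\pm=\mu_1^\pm=0$ from \eqref{phi-outer1}, \eqref{mu-outer1} and $\rho_0^\pm>0$, and then let the vanishing terms drop out of \eqref{uu-outer2} and \eqref{theta-outer1}. Your explicit remark that the root $\phi_0^\pm\equiv 0$ of the cubic must be excluded via the sign convention in \eqref{Omegasign} is a small but welcome addition that the paper leaves implicit.
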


\subsection{Inner expansion}\label{innersection}
In this subsection, we propose to analysis the inner expansion near the interface $\Gamma$.
Let $d(\mathbf{x},t)$ be signed distance to $\Gamma$, which is well-defined near the interface. Then the unit normal of the interface pointing to $\Omega^+$ is given by $\mathbf{n}=\nabla d$ and the normal velocity of the interface $ \Gamma $ is given by $ V_\mathbf{n}=-\partial_t d $.
We introduce a new rescaled variable
\begin{equation}\label{d}
  \xi=\frac{d(\mathbf{x},t)}{\epsilon}.
\end{equation}
For any function $g(\mathbf{x},t)$ (e.g. $g=\rho_\epsilon,\mathbf{u}_\epsilon, \phi_\epsilon, \mu_\epsilon, \theta_\epsilon$), we can rewrite it as
\begin{equation}\label{g}
g(\mathbf{x},t)=\widetilde{g}(\mathbf{x},t,\xi).
\end{equation}
Then we have
\begin{equation}\label{new-d}
\begin{array}{l}
\nabla g=\nabla\widetilde{g}+\epsilon^{-1}\partial_\xi\widetilde{g}\mathbf{n},\\
\Delta g=\Delta \widetilde{g}+\epsilon^{-1}\partial_\xi\widetilde{g}\kappa+2\epsilon^{-1}(\mathbf{n}\cdot\nabla)\partial_{\xi}\widetilde{g}
+\epsilon^{-2}\partial_{\xi\xi}\widetilde{g},\\
\partial_t g=\partial_t\widetilde{g}-\epsilon^{-1} \partial_\xi\widetilde{g} V_\mathbf{n}.
\end{array}
\end{equation}
Here we use the fact that $\nabla\cdot\mathbf n=\kappa$,  the mean curvature of the
interface. $\kappa(\mathbf{x})$ for $\mathbf{x}\in\Gamma(t)$  is positive (resp. negative) if the domain $\Omega_-$ is convex (resp. concave) near $\mathbf{x}$.

In the inner region,  we assume that
\begin{equation}\label{inner-e}
\begin{array}{l}
\widetilde\rho_{\epsilon}=\widetilde\rho_0+\epsilon\widetilde\rho_1+\epsilon^2 \widetilde\rho_2+\cdots,\\
\widetilde{\mathbf{u}}_{\epsilon}=\widetilde{\mathbf{u}}_0+\epsilon\widetilde{\mathbf{u}}_1+\epsilon^2 \widetilde{\mathbf u}_2+\cdots,\\
\widetilde\phi_{\epsilon}=\widetilde\phi_0+\epsilon\widetilde\phi_1+\epsilon^2 \widetilde\phi_2+\cdots,\\
\widetilde\mu_{\epsilon}=\epsilon^{-1}\widetilde \mu_0+\widetilde \mu_1+\epsilon\widetilde \mu_2+\cdots,\\
\widetilde\theta_{\epsilon}=\widetilde \theta_0+\epsilon\widetilde \theta_1+\epsilon^2\widetilde \theta_2+\cdots.
\end{array}
\end{equation}
In the next we represent the system \eqref{NSAC} in the new coordinates and compare the order of the $ \epsilon$ coefficients.
Let us first plug ansatz \eqref{inner-e} in the mass conservation equation \eqref{NSAC}$_1$ to infer the following equation in new coordinates
\begin{align}
\displaystyle  \partial_t \widetilde{\rho}_0-\eps^{-1}\partial_\xi\widetilde{\rho}_0 V_n+\textrm{div}(\widetilde{\rho}_0 \widetilde{\mathbf{u}}_0)+\epsilon^{-1}\partial_\xi(\widetilde{\rho}_0 \widetilde{\mathbf{u}}_0) \cdot \mathbf{n}+\mathcal{O}(\epsilon)=0.
\end{align}
By comparing the coefficients of same $\epsilon$-order, we obtain
\begin{align}
\displaystyle \mathcal{O}(\epsilon^{-1}):&  \quad -\partial_\xi\widetilde{\rho}_0 V_\mathbf{n}+\partial_\xi(\widetilde{\rho}_0 \widetilde{\mathbf{u}}_0) \cdot \mathbf{n}=0,\label{rho-inner1}\\
\displaystyle \mathcal{O}(1):&  \quad \partial_t \widetilde{\rho}_0+\textrm{div}(\widetilde{\rho}_0 \widetilde{\mathbf{u}}_0)-
\partial_\xi\widetilde{\rho}_1 V_\mathbf{n}+ \partial_{\xi}(\widetilde{\rho}_0 \widetilde{\mathbf{u}}_1+\widetilde{\rho}_1 \widetilde{\mathbf{u}}_0)\cdot \mathbf{n}=0. \label{rho-inner2}
\end{align}
Next, we evaluate the right-hand-side of the momentum conservation equation \eqref{NSAC}$_2$. By using \eqref{new-d}, through  differentiation, we obtain
\begin{align}\label{div-s-exp}
&\mathop{\mathrm{div}}\nolimits(\nu(\nabla \mathbf{u}+\nabla ^T\mathbf{u}))+\mathop{\mathrm{div}}\nolimits(\lambda\mathop{\mathrm{div}}\nolimits\mathbf{u}\mathbb{I})\notag\\
&=\frac{1}{\epsilon^2}\partial_\xi\big((\nu+\lambda)\partial_\xi\widetilde{\mathbf{u}}_0\cdot \mathbf{n}\big)\mathbf{n}+ \frac{1}{\epsilon^2}\partial_\xi\big(\nu\partial_\xi\widetilde{\mathbf{u}}_0\big) +\frac{1}{\epsilon}\partial_\xi\big(\nu(\nabla \widetilde{\mathbf{u}}_0+\nabla^{T} \widetilde{\mathbf{u}}_0)\big)\cdot \mathbf{n}
\nonumber \\
&\qquad +\frac{1}{\epsilon}\mathop{\mathrm{div}}\nolimits(\nu \partial_\xi \widetilde{\mathbf{u}}_0 \otimes \mathbf{n})+\frac{1}{\epsilon}\nu\partial_\xi \widetilde{\mathbf{u}}_0 \kappa+\frac{1}{\epsilon}\partial_\xi\big(\lambda\mathop{\mathrm{div}}\nolimits \widetilde{\mathbf{u}}_0\big)\cdot \mathbf{n} \nonumber\\
& \qquad+\frac{1}{\epsilon}\nabla \big(\lambda\partial_\xi \widetilde{\mathbf{u}}_0 \cdot \mathbf{n}\big)
+\frac{1}{\epsilon}(\nu+\lambda)(\partial_\xi \widetilde{\mathbf{u}}_0 \cdot \nabla)\mathbf{n}+\mathcal{O}(1) ,
\end{align}
and
\begin{align}\label{expansion of p}
 & \nabla p(\rho,\theta)+\epsilon\mathop{\mathrm{div}}\nolimits(\nabla \phi \otimes \nabla\phi)\nonumber \\
 &= \frac{1}{\epsilon}\partial_\xi p_0 \mathbf{n}-\frac{1}{\epsilon}\nabla ({\tilde{\phi}_0}^3-\widetilde{\phi}_0) - \frac{1}{4\epsilon^2}\partial_\xi ({\tilde{\phi}_0}^2-1)^2 \mathbf{n} +\frac{1}{2\eps^2}\partial_\xi|\partial_\xi \widetilde{\phi}_0|^2 \mathbf{n}\nonumber \\
 &\quad +\frac{1}{\epsilon} \mathbf{n}\cdot \nabla|\partial_\xi \widetilde{\phi}_0|^2 \mathbf{n}
+\frac{1}{\epsilon}|\partial_\xi \widetilde{\phi}_0|^2 \kappa \mathbf{n}
+\frac{1}{\epsilon}\partial_{\xi\xi}\widetilde{\phi}_0 \nabla\widetilde{\phi}_0 +\mathcal{O}(1),
\end{align}
where $ p_0=\tilde{\rho}_0^2e_\rho(\tilde{\rho}_0,\tilde{\theta}_0)+\tilde{\theta}_0p_\theta(\tilde{\rho}_0,\tilde{\theta}_0)$.
Substituting \eqref{div-s-exp} and \eqref{expansion of p} into  \eqref{NSAC}$_2$, we have
\begin{align}
& -\frac{1}{\epsilon}\widetilde{\rho}_0\partial_\xi \widetilde{\mathbf{u}}_0V_n+ \frac{1}{\epsilon}\widetilde{\rho}_0 \widetilde{\mathbf{u}}_0 \cdot\mathbf{n} \partial_\xi \widetilde{\mathbf{u}}_0\nonumber \\
&=\frac{1}{\epsilon^2}\partial_\xi\big((\nu+\lambda)\partial_\xi\widetilde{\mathbf{u}}_0\cdot \mathbf{n}\big)\mathbf{n}+ \frac{1}{\epsilon^2}\partial_\xi\big(\nu\partial_\xi\widetilde{\mathbf{u}}_0\big)+\frac{1}{\epsilon}\partial_\xi\big(\nu(\nabla \widetilde{\mathbf{u}}_0+\nabla^{T} \widetilde{\mathbf{u}}_0)\big)\cdot \mathbf{n} \nonumber\\
&\quad+\frac{1}{\epsilon}\mathop{\mathrm{div}}\nolimits(\nu \partial_\xi \widetilde{\mathbf{u}}_0 \otimes \mathbf{n}) +\frac{1}{\epsilon}\nu \partial_\xi \widetilde{\mathbf{u}}_0 \kappa+\frac{1}{\epsilon}\partial_\xi\big(\lambda\mathop{\mathrm{div}}\nolimits \widetilde{\mathbf{u}}_0\big)\cdot \mathbf{n} +\frac{1}{\epsilon}\nabla \big(\lambda\partial_\xi \widetilde{\mathbf{u}}_0 \cdot \mathbf{n}\big)\nonumber\\
&\quad +\frac{1}{\epsilon}(\nu+\lambda)(\partial_\xi \widetilde{\mathbf{u}}_0 \cdot \nabla)\mathbf{n}-\frac{1}{\epsilon}\partial_\xi p_0 \mathbf{n}+\frac{1}{4\epsilon}\nabla ({\tilde{\phi}_0}^2-1)^2 + \frac{1}{4\epsilon^2}\partial_\xi ({\tilde{\phi}_0}^2-1)^2 \mathbf{n}\nonumber\\
&\quad-\frac{1}{2\epsilon^2}\partial_\xi|\partial_\xi \widetilde{\phi}_0|^2 \mathbf{n}
-\frac{1}{\epsilon}\mathbf{n}\cdot \nabla|\partial_\xi \widetilde{\phi}_0|^2 \mathbf{n}
-\frac{1}{\epsilon}|\partial_\xi \widetilde{\phi}_0|^2 \kappa \mathbf{n}
-\frac{1}{\epsilon}\partial_{\xi\xi}\widetilde{\phi}_0 \nabla\widetilde{\phi}_0+\mathcal{O}(1).
\end{align}
By comparing the coefficients of same $\epsilon$-order, we achieve
\begin{align}
\displaystyle \mathcal{O}(\epsilon^{-2}): &\quad \partial_\xi\big((\nu+\lambda)\partial_\xi\widetilde{\mathbf{u}}_0\cdot \mathbf{n}\big)\mathbf{n}+ \partial_\xi\big(\nu\partial_\xi\widetilde{\mathbf{u}}_0\big)+\frac14\partial_\xi ({\tilde{\phi}_0}^2-1)^2\mathbf{n}-\frac{1}{2}\partial_\xi|\partial_\xi \widetilde{\phi}_0|^2 \mathbf{n}=0,\label{uu-inner1}\\
\displaystyle \mathcal{O}(\epsilon^{-1}): &\quad -\widetilde{\rho}_0\partial_\xi \widetilde{\mathbf{u}}_0V_n+ \widetilde{\rho}_0 \widetilde{\mathbf{u}}_0 \cdot\mathbf{n} \partial_\xi \widetilde{\mathbf{u}}_0 +\partial_\xi p_0 \mathbf{n}\nonumber\\
&=\partial_\xi\big(\nu(\nabla \widetilde{\mathbf{u}}_0+\nabla^{T} \widetilde{\mathbf{u}}_0)\big)\cdot \mathbf{n} +\mathop{\mathrm{div}}\nolimits(\nu \partial_\xi \widetilde{\mathbf{u}}_0 \otimes \mathbf{n})
+\nu \partial_\xi \widetilde{\mathbf{u}}_0 \kappa \nonumber\\
& +\partial_\xi\big(\lambda\mathop{\mathrm{div}}\nolimits \widetilde{\mathbf{u}}_0\big)\cdot \mathbf{n}
+\nabla \big(\lambda\partial_\xi \widetilde{\mathbf{u}}_0 \cdot \mathbf{n}\big)
+(\nu+\lambda)(\partial_\xi \widetilde{\mathbf{u}}_0 \cdot \nabla)\mathbf{n} \nonumber\\
&+\frac14\nabla ({\tilde{\phi}_0}^2-1)^2+\partial_\xi (({\tilde{\phi}_0}^3-\widetilde{\phi}_0)\widetilde{\phi}_1)\mathbf{n}-\mathbf{n}\cdot\nabla|\partial_\xi \widetilde{\phi}_0|^2 \mathbf{n}-|\partial_\xi \widetilde{\phi}_0|^2 \kappa \mathbf{n} \nonumber\\
&-\partial_{\xi\xi}\widetilde{\phi}_0 \nabla\widetilde{\phi}_0+\partial_\xi\big((\nu+\lambda)\partial_\xi\hat{\mathbf{u}}_1\cdot \mathbf{n}\big)\mathbf{n}+ \partial_\xi\big(\nu\partial_\xi\hat{\mathbf{u}}_1\big)-\partial_\xi\big(\partial_\xi \widetilde{\phi}_0\partial_\xi \widetilde{\phi}_1\big)\mathbf{n}. \label{uu-inner2}
\end{align}
Similarly, equation \eqref{NSAC}$_3$ can be rewritten in the new coordinates as
\begin{align}
& -\frac{1}{\epsilon}\widetilde{\rho}_0 \partial_\xi \widetilde{\phi}_0 V_\mathbf{n}+\frac{1}{\epsilon}\widetilde{\rho}_0 \widetilde{\mathbf{u}}_0 \cdot \mathbf{n}\partial_\xi \widetilde{\phi}_0=-M_\epsilon\frac{1}{\epsilon}\widetilde{\mu}_0+\mathcal{O}(1).
\end{align}
For (C1) $M_\epsilon=1$, we compare the coefficients of same $\epsilon$-order to get
\begin{align}
\displaystyle \mathcal{O}(\epsilon^{-1}):&  \quad -\widetilde{\rho}_0 \partial_\xi \widetilde{\phi}_0 V_n+\widetilde{\rho}_0 \widetilde{\mathbf{u}}_0\cdot \mathbf{n} \partial_\xi \widetilde{\phi}_0 =-\widetilde{\mu}_0. \label{phi-inner2}
\end{align}
For (C2) $M_\epsilon=\frac{1}{\epsilon}$, we have
\begin{align}
\displaystyle \mathcal{O}(\epsilon^{-2}):&  \quad \widetilde{\mu}_0=0,\label{phi-inner2-1}\\
\displaystyle \mathcal{O}(\epsilon^{-1}):&  \quad -\widetilde{\rho}_0 \partial_\xi \widetilde{\phi}_0 V_n+\widetilde{\rho}_0 \widetilde{\mathbf{u}}_0\cdot \mathbf{n} \partial_\xi \widetilde{\phi}_0 =-\widetilde{\mu}_1. \label{phi-inner2-2}
\end{align}
Plugging ansatz \eqref{inner-e} in the potential equation \eqref{NSAC}$_4$, we obtain
\begin{align}
\frac{1}{\epsilon}\widetilde{\rho}_0 \widetilde{\mu}_0=-2(\mathbf{n}\cdot\nabla) \partial_\xi\widetilde{\phi}_0-\partial_\xi \widetilde{\phi}_0\kappa-\frac{1}{\epsilon}\partial_{\xi\xi} \widetilde{\phi}_0+\frac{1}{\epsilon}({\tilde{\phi}_0}^3-\widetilde{\phi}_0)+\mathcal{O}(\eps),
\end{align}
comparing the coefficients of same $\eps$-order, we achieve
\begin{align}
\displaystyle \mathcal{O}(\eps^{-1}):&  \quad \widetilde{\rho}_0 \widetilde{\mu}_0=-\partial_{\xi\xi} \widetilde{\phi}_0+{\tilde{\phi}_0}^3-\widetilde{\phi}_0,\label{mu-inner1}\\
\mathcal{O}(1):&  \quad \widetilde{\rho}_0\widetilde{\mu}_1+\widetilde{\rho}_1 \widetilde{\mu}_0=-2(\mathbf{n}\cdot\nabla) \partial_\xi\widetilde{\phi}_0-\partial_\xi \widetilde{\phi}_0\kappa-\partial_{\xi\xi} \widetilde{\phi}_1+(3{\tilde{\phi}_0}^2-1)\widetilde{\phi}_1.\label{mu-inner2}
\end{align}
Finally, plugging ansatz \eqref{inner-e} in the energy equation \eqref{NSAC}$_5$.
For (C1) $M_\epsilon=1$, we compare the coefficients of same $\epsilon$-order to get
\begin{align}
\displaystyle \mathcal{O}(\eps^{-2}):&  \quad \mathrm{k}\partial_{\xi\xi}\widetilde{\theta}_0+\nu|\partial_\xi\widetilde{\mathbf{u}}_0|^2+(\nu+\lambda)|\partial_\xi\widetilde{\mathbf{u}}_0 \cdot \mathbf{n}|^2+\widetilde{\mu}_0^2=0,\label{theta-inner1}\\
\displaystyle \mathcal{O}(\eps^{-1}):&  \quad e_\theta(\widetilde{\rho}_0,\widetilde{\theta}_0 )(-\widetilde{\rho}_0\partial_\xi\widetilde{\theta}_0V_n+\widetilde{\rho}_0\widetilde{\mathbf{u}}_0 \cdot \mathbf{n}\partial_\xi\widetilde{\theta}_0) +\widetilde{\theta}_0 p_\theta(\widetilde{\rho}_0,\widetilde{\theta}_0 )\partial_\xi\widetilde{\mathbf{u}}_0 \cdot \mathbf{n}\nonumber \\
 &\quad =2\mathrm{k} (\mathbf{n}\cdot\nabla) \partial_\xi\widetilde{\theta}_0+\mathrm{k}\partial_\xi \widetilde{\theta}_0\kappa+\mathrm{k}\partial_{\xi\xi} \widetilde{\theta}_1+2 \nu\mathbb{D}\widetilde{\mathbf{u}}_0:\partial_\xi\widetilde{\mathbf{u}}_0 \otimes \mathbf{n} +2 \lambda \mathop{\mathrm{div}}\nolimits\widetilde{\mathbf{u}}_0 \partial_\xi\widetilde{\mathbf{u}}_0 \cdot \mathbf{n} \nonumber \\
  &\qquad +2\nu \partial_\xi\widetilde{\mathbf{u}}_0 \cdot \partial_\xi\widetilde{\mathbf{u}}_1+2(\nu+\lambda)\partial_\xi\widetilde{\mathbf{u}}_0 \cdot \mathbf{n} \partial_\xi\widetilde{\mathbf{u}}_1 \cdot \mathbf{n}+2\widetilde{\mu}_0\widetilde{\mu}_1. \label{theta-inner2}
\end{align}
For (C2) $M_\epsilon=\frac{1}{\epsilon}$, we have
\begin{align}
\displaystyle \mathcal{O}(\eps^{-3}):&  \quad \widetilde{\mu}_0=0,\label{theta-inner3}\\
\displaystyle \mathcal{O}(\eps^{-2}):&  \quad \mathrm{k}\partial_{\xi\xi}\widetilde{\theta}_0+\nu|\partial_\xi\widetilde{\mathbf{u}}_0|^2+(\nu+\lambda)|\partial_\xi\widetilde{\mathbf{u}}_0 \cdot \mathbf{n}|^2+2\widetilde{\mu}_0\widetilde{\mu}_1=0,\label{theta-inner4}\\
\displaystyle \mathcal{O}(\eps^{-1}):&  \quad e_\theta(\widetilde{\rho}_0,\widetilde{\theta}_0 )(-\widetilde{\rho}_0\partial_\xi\widetilde{\theta}_0V_n+\widetilde{\rho}_0\widetilde{\mathbf{u}}_0 \cdot \mathbf{n}\partial_\xi\widetilde{\theta}_0) +\widetilde{\theta}_0 p_\theta(\widetilde{\rho}_0,\widetilde{\theta}_0 )\partial_\xi\widetilde{\mathbf{u}}_0 \cdot \mathbf{n}\nonumber \\
 &\quad =2\mathrm{k} (\mathbf{n}\cdot\nabla) \partial_\xi\widetilde{\theta}_0+\mathrm{k}\partial_\xi \widetilde{\theta}_0\kappa+\mathrm{k}\partial_{\xi\xi} \widetilde{\theta}_1+2 \nu\mathbb{D}\widetilde{\mathbf{u}}_0:\partial_\xi\widetilde{\mathbf{u}}_0 \otimes \mathbf{n} +2 \lambda \mathop{\mathrm{div}}\nolimits\widetilde{\mathbf{u}}_0 \partial_\xi\widetilde{\mathbf{u}}_0 \cdot \mathbf{n} \nonumber \\
  &\qquad +2\nu \partial_\xi\widetilde{\mathbf{u}}_0 \cdot \partial_\xi\widetilde{\mathbf{u}}_1+2(\nu+\lambda)\partial_\xi\widetilde{\mathbf{u}}_0 \cdot \mathbf{n} \partial_\xi\widetilde{\mathbf{u}}_1 \cdot \mathbf{n}+\widetilde{\mu}_1^2+2\widetilde{\mu}_0\widetilde{\mu}_2. \label{theta-inner5}
\end{align}

Noting that the matching conditions for inner and outer expansions are as follows,
\begin{align}
&\lim_{\xi\rightarrow\pm\infty} \hat{g}_0(\mathbf{x},\xi)= g_0^{\pm}(\mathbf{x}), \label{match1}\\
&\lim_{\xi\rightarrow\pm\infty}  (\nabla_\mathbf{x} \hat{g}_0(\mathbf{x},\xi)+\partial_\xi\hat g_{1}(\mathbf{x},\xi)\mathbf{n})=\nabla g_0^{\pm}(\mathbf{x}).\label{match2}
\end{align}

{\it Zeroth approximation.~~}
We consider the zeroth approximation in the inner expansion:
\begin{align}
&-\partial_\xi\widetilde{\rho}_0 V_\mathbf{n}+\partial_\xi(\widetilde{\rho}_0 \widetilde{\mathbf{u}}_0) \cdot \mathbf{n}=0,\label{rho-inner11}\\
&\partial_\xi\big((\nu+\lambda)\partial_\xi\widetilde{\mathbf{u}}_0\cdot \mathbf{n}\big)\mathbf{n}+ \partial_\xi\big(\nu\partial_\xi\widetilde{\mathbf{u}}_0\big)+\frac{1}{4}\partial_\xi ({\tilde{\phi}_0}^2-1)^2\mathbf{n}-\frac{1}{2}\partial_\xi|\partial_\xi \widetilde{\phi}_0|^2 \mathbf{n}=0,\label{uu-inner11}\\
&\chi_{M_\epsilon}\widetilde{\rho}_0 \partial_\xi \widetilde{\phi}_0 (V_\mathbf{n}- \widetilde{\mathbf{u}}_0\cdot \mathbf{n} )=\widetilde{\mu}_0,\label{phi-inner11}\\
&\widetilde{\rho}_0\widetilde{\mu}_0=-\partial_{\xi\xi}\widetilde{\phi}_0+{\tilde{\phi}_0}^3-\widetilde{\phi}_0,\label{mu-inner11}\\
&\mathrm{k}\partial_{\xi\xi}\widetilde{\theta}_0+\nu|\partial_\xi\widetilde{\mathbf{u}}_0|^2+(\nu+\lambda)|\partial_\xi\widetilde{\mathbf{u}}_0 \cdot \mathbf{n}|^2+\chi_{M_\epsilon}\widetilde{\mu}_0^2=0,\label{theta-inner11}
\end{align}
with boundary conditions:
\begin{align}\label{in-bc}
&\widetilde{\rho}_0(\mathbf{x},t,\pm \infty)=\rho_0^{\pm}(\mathbf{x},t),\\
&\widetilde{\mathbf{u}}_0(\mathbf{x},t,\pm \infty)=\mathbf{u}_0^{\pm}(\mathbf{x},t),\label{in-bc-uu}\\
&\widetilde{\phi}_0(\mathbf{x},t,\pm \infty)=\pm 1,\label{in-bc-phi}\\
&\widetilde{\theta}_0(\mathbf{x},t,\pm \infty)=\theta_0^{\pm}(\mathbf{x},t) \label{in-bc-theta},
\end{align}
where $\chi_{M_\epsilon}$ is defined by \eqref{chi-eta}.


\begin{lemma}\label{lem-rho-u-phi}
Let $ (\widetilde{\rho}_0,\widetilde{\mathbf{u}}_0,\widetilde{\phi}_0,\widetilde{\theta}_0)$ be a solution of \eqref{rho-inner11}-\eqref{in-bc-phi}. It holds
\begin{align}
&\displaystyle \widetilde{\rho}_0(t,x,\xi)(V_n-\widetilde{\mathbf{u}}_0 \cdot \mathbf{n})(t,x,\xi) =\rho_0^{\pm}(\mathbf{x},t)(V_n-\mathbf{u}_0^{\pm}\cdot \mathbf{n})(\mathbf{x},t),\label{rho-u}\\
& \widetilde{\phi}_0(t,x,\xi)=\tanh(\frac{\xi}{\sqrt{2}}),\label{phi-xi}\\
&\partial_\xi \widetilde{\mathbf{u}}_0=0,\qquad \partial_\xi \widetilde{\theta}_0=0, \label{uuu}\\
&
\left\{\begin{array}{llll}
V_\mathbf{n}-\widetilde{\mathbf{u}}_0\cdot \mathbf{n}=0,&\ \mathrm{for~~(C1)}\ M_\epsilon=1, \\
\partial_\xi \widetilde{\rho}_0(V_\mathbf{n}-\widetilde{\mathbf{u}}_0\cdot \mathbf{n})=0,&\ \mathrm{for~~(C2)}\ M_\epsilon=\frac{1}{\epsilon}.
\end{array}\right.\label{vn-u}
\end{align}
\end{lemma}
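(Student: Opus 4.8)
The plan is to extract from the zeroth-order inner system \eqref{rho-inner11}--\eqref{theta-inner11} together with the matching/boundary conditions \eqref{in-bc}--\eqref{in-bc-theta} exactly the four stated conclusions, proceeding in an order that unlocks each equation in turn. First I would integrate \eqref{rho-inner11} in $\xi$: since $-\partial_\xi\widetilde\rho_0 V_{\mathbf n}+\partial_\xi(\widetilde\rho_0\widetilde{\mathbf u}_0)\cdot\mathbf n = \partial_\xi\big(\widetilde\rho_0(\widetilde{\mathbf u}_0\cdot\mathbf n - V_{\mathbf n})\big)=0$ (here $V_{\mathbf n}$ and $\mathbf n$ depend only on $(\mathbf x,t)$, not on $\xi$), the quantity $\widetilde\rho_0(V_{\mathbf n}-\widetilde{\mathbf u}_0\cdot\mathbf n)$ is independent of $\xi$; evaluating as $\xi\to\pm\infty$ and using \eqref{in-bc}, \eqref{in-bc-uu} gives \eqref{rho-u}, and in particular forces the one-sided limits $\rho_0^{+}(V_{\mathbf n}-\mathbf u_0^{+}\cdot\mathbf n)=\rho_0^{-}(V_{\mathbf n}-\mathbf u_0^{-}\cdot\mathbf n)$, a fact I expect to be reused later in Section~3.

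Next I would treat the momentum equation \eqref{uu-inner11}. The strategy is to split into normal and tangential components. Dotting with $\mathbf n$ and using $|\partial_\xi\widetilde{\mathbf u}_0|^2 = |\partial_\xi\widetilde{\mathbf u}_0\cdot\mathbf n|^2 + |(\partial_\xi\widetilde{\mathbf u}_0)_{\tan}|^2$ is not quite the route; instead I integrate \eqref{uu-inner11} once in $\xi$ and apply the matching condition \eqref{match2}, whose right-hand side is finite, to conclude that the integrated normal part $(\nu+\lambda)\partial_\xi\widetilde{\mathbf u}_0\cdot\mathbf n + \nu\,(\partial_\xi\widetilde{\mathbf u}_0\cdot\mathbf n) + \tfrac14(\widetilde\phi_0^2-1)^2 - \tfrac12|\partial_\xi\widetilde\phi_0|^2$ and the tangential part $\nu(\partial_\xi\widetilde{\mathbf u}_0)_{\tan}$ are each constant in $\xi$; since $\partial_\xi\widetilde{\mathbf u}_0\to 0$ at $\pm\infty$ (again by \eqref{in-bc-uu}, the limits being $\xi$-independent), the tangential piece vanishes identically, giving $(\partial_\xi\widetilde{\mathbf u}_0)_{\tan}\equiv 0$, and the normal ODE reduces to a balance between $\partial_\xi\widetilde{\mathbf u}_0\cdot\mathbf n$ and the $\widetilde\phi_0$-terms. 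Combined with the profile equation obtained by substituting \eqref{mu-inner11} into \eqref{phi-inner11}, which yields the standard Allen--Cahn travelling-wave ODE $\partial_{\xi\xi}\widetilde\phi_0 = \widetilde\phi_0^3-\widetilde\phi_0$ up to the factor $\widetilde\rho_0(V_{\mathbf n}-\widetilde{\mathbf u}_0\cdot\mathbf n)\partial_\xi\widetilde\phi_0$, the equipartition identity $\tfrac12(\partial_\xi\widetilde\phi_0)^2 = \tfrac14(\widetilde\phi_0^2-1)^2$ holds, so those two terms cancel in \eqref{uu-inner11} and force $\partial_\xi\widetilde{\mathbf u}_0\cdot\mathbf n\equiv 0$ as well; hence $\partial_\xi\widetilde{\mathbf u}_0=0$, the first half of \eqref{uuu}. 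Then the $\widetilde\mu_0^2$ (or $\nu|\partial_\xi\widetilde{\mathbf u}_0|^2$) terms in \eqref{theta-inner11} that remain are handled: integrating \eqref{theta-inner11} twice with the bounded matching data forces $\partial_\xi\widetilde\theta_0\equiv 0$, the second half of \eqref{uuu}, and as a by-product $\widetilde\mu_0\equiv 0$ when $\chi_{M_\epsilon}=1$.

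For the ODE profile \eqref{phi-xi}: having established $\partial_\xi\widetilde{\mathbf u}_0\cdot\mathbf n=0$, equation \eqref{phi-inner11} combined with \eqref{mu-inner11} and the just-derived equipartition gives $\partial_{\xi\xi}\widetilde\phi_0 = \widetilde\phi_0^3-\widetilde\phi_0$; with boundary data $\widetilde\phi_0(\pm\infty)=\pm1$ from \eqref{in-bc-phi} and the normalization $\widetilde\phi_0(0)=0$ coming from the definition \eqref{e:interface} of $\Gamma$ as the zero level set (so $\xi=0$ on $\Gamma$), the unique monotone solution is $\tanh(\xi/\sqrt2)$. Finally the dichotomy \eqref{vn-u}: in case (C1), $\chi_{M_\epsilon}=1$ and $\widetilde\mu_0=0$, so \eqref{phi-inner11} reads $\widetilde\rho_0\,\partial_\xi\widetilde\phi_0\,(V_{\mathbf n}-\widetilde{\mathbf u}_0\cdot\mathbf n)=0$; since $\partial_\xi\widetilde\phi_0 = \tfrac1{\sqrt2}\,\mathrm{sech}^2(\xi/\sqrt2)>0$ and $\widetilde\rho_0>0$, we get $V_{\mathbf n}-\widetilde{\mathbf u}_0\cdot\mathbf n=0$; in case (C2), $\chi_{M_\epsilon}=0$, so \eqref{phi-inner11} degenerates and instead the relation comes from the $\mathcal O(\epsilon^{-1})$ mass equation in the form $\partial_\xi\big(\widetilde\rho_0(V_{\mathbf n}-\widetilde{\mathbf u}_0\cdot\mathbf n)\big)=0$, i.e.\ $\partial_\xi\widetilde\rho_0\,(V_{\mathbf n}-\widetilde{\mathbf u}_0\cdot\mathbf n)=0$ after using $\partial_\xi\widetilde{\mathbf u}_0=0$. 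The main obstacle I anticipate is the momentum step: one must be careful that the finiteness of the right side of the matching condition \eqref{match2} genuinely pins down the integration constants, and that the normal/tangential decomposition of \eqref{uu-inner11} interacts correctly with the $\xi$-independence of $\mathbf n$ and $\kappa$; getting the equipartition identity and the cancellation of the $(\widetilde\phi_0^2-1)^2$ and $|\partial_\xi\widetilde\phi_0|^2$ terms exactly right is where the argument could slip.
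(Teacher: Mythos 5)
Your treatment of \eqref{rho-u}, of the tangential component of \eqref{uu-inner11}, and of case (C2) is sound, but there is a genuine gap in case (C1) at the step where you assert the equipartition identity. Substituting \eqref{mu-inner11} into \eqref{phi-inner11} for $\chi_{M_\epsilon}=1$ does not give the standard Allen--Cahn ODE; it gives the travelling-wave equation
\begin{equation*}
-\partial_{\xi\xi}\widetilde{\phi}_0+\widetilde{\phi}_0^3-\widetilde{\phi}_0
= j\,\widetilde{\rho}_0\,\partial_\xi\widetilde{\phi}_0,
\qquad j:=\widetilde{\rho}_0\,(V_{\mathbf n}-\widetilde{\mathbf{u}}_0\cdot\mathbf n),
\end{equation*}
where $j$ is constant in $\xi$ by \eqref{rho-u}, and for a travelling wave with nonzero speed the equipartition $\tfrac12|\partial_\xi\widetilde{\phi}_0|^2=\tfrac14(\widetilde{\phi}_0^2-1)^2$ simply fails: multiplying by $\partial_\xi\widetilde{\phi}_0$ and integrating from $-\infty$ to $\xi$ leaves the extra term $j\int_{-\infty}^{\xi}\widetilde{\rho}_0|\partial_\xi\widetilde{\phi}_0|^2\,d\xi'$. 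Since your subsequent chain is equipartition $\Rightarrow$ $\partial_\xi\widetilde{\mathbf{u}}_0\cdot\mathbf n=0$ $\Rightarrow$ (energy equation) $\widetilde{\mu}_0=0$ $\Rightarrow$ $V_{\mathbf n}-\widetilde{\mathbf{u}}_0\cdot\mathbf n=0$, and this last conclusion is exactly the statement $j=0$ needed to justify the first step, the (C1) argument is circular as written.

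The gap is repairable with ingredients you already have. Either (i) integrate the identity above against $\partial_\xi\widetilde{\phi}_0$ over all of $\mathbb{R}$: the left side contributes only vanishing boundary terms by \eqref{in-bc-phi} and the decay of $\partial_\xi\widetilde{\phi}_0$, so $j\int_{\mathbb{R}}\widetilde{\rho}_0|\partial_\xi\widetilde{\phi}_0|^2\,d\xi=0$, and since $\widetilde{\rho}_0>0$ and $\widetilde{\phi}_0$ is nonconstant this forces $j=0$, hence $V_{\mathbf n}=\widetilde{\mathbf{u}}_0\cdot\mathbf n$ and then equipartition; or (ii) exploit the sign structure of \eqref{theta-inner11} first: every term other than $\mathrm{k}\partial_{\xi\xi}\widetilde{\theta}_0$ is nonnegative, so integrating over $\mathbb{R}$ with $\partial_\xi\widetilde{\theta}_0\to 0$ gives $\partial_\xi\widetilde{\mathbf{u}}_0\equiv 0$ and $\widetilde{\mu}_0\equiv 0$ in case (C1) at once, after which \eqref{phi-inner11} yields \eqref{vn-u} and \eqref{mu-inner11} yields the profile. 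The paper takes yet another route: it multiplies \eqref{phi-inner11} by $\partial_\xi\widetilde{\phi}_0$ and the integrated normal momentum balance \eqref{uu-f-chi} by $\partial_\xi(\widetilde{\rho}_0^{-2})$, adds, and integrates to obtain the single signed identity \eqref{uu-phi-integ}, from which \eqref{vn-u} follows for both (C1) and (C2) simultaneously \emph{before} the equipartition and the $\tanh$ profile are derived. Your (C2) shortcut --- obtaining $\partial_\xi\widetilde{\rho}_0\,(V_{\mathbf n}-\widetilde{\mathbf{u}}_0\cdot\mathbf n)=0$ directly from \eqref{rho-u} once $\partial_\xi\widetilde{\mathbf{u}}_0=0$ is known --- is correct and somewhat simpler than the paper's derivation of that case.
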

\begin{proof}
From equation \eqref{rho-inner11} we have
\begin{align}\label{rho-u1}
 \displaystyle \partial_\xi(\widetilde{\rho}_0(V_{\mathbf{n}}-\widetilde{\mathbf{u}}_0 \cdot \mathbf{n})) =0,
 \end{align}
and by integration we obtain \eqref{rho-u}.
By multiplying \eqref{phi-inner11} with $ \partial_\xi \widetilde{\phi}_0$, and combining with \eqref{mu-inner11}, we obtain
\begin{align}\label{lem3-2}
\displaystyle -\chi_{M_\epsilon} (V_n-\widetilde{\mathbf{u}}_0\cdot \mathbf{n}) |\partial_\xi \widetilde{\phi}_0|^2 =\frac{1}{\widetilde{\rho}_0^2}\big(
 \frac{1}{2}\partial_\xi|\partial_\xi \widetilde{\phi}_0|^2-\frac14\partial_\xi (\tilde{\phi}_0^2-1)^2\big).
\end{align}
Multiplying equation \eqref{uu-inner11} by the norm vector $ \mathbf{n} $, we infer that
\begin{align}\label{lem2-1}
\displaystyle  \partial_\xi\big((\lambda+2\nu)\partial_\xi \widetilde{\mathbf{u}}_0\cdot \mathbf{n}\big)+\frac14\partial_\xi (\tilde{\phi}_0^2-1)^2-\frac{1}{2}\partial_\xi|\partial_\xi \widetilde{\phi}_0|^2=0.
\end{align}
By integrating equation \eqref{lem2-1} from $ -\infty $ to $ \xi $, we obtain
\begin{align}\label{uu-f-chi}
\displaystyle (\lambda+2\nu)\partial_\xi \widetilde{\mathbf{u}}_0\cdot \mathbf{n}+\frac14 (\tilde{\phi}_0^2-1)^2-\frac{1}{2}|\partial_\xi \widetilde{\phi}_0|^2=0.
\end{align}
Multiplying equation \eqref{uu-f-chi} by $\frac{\partial}{\partial \xi}(\frac{1}{\widetilde{\rho}_0^2})$, combining with \eqref{rho-inner11}, we infer that
\begin{align}\label{uu-phi}
\displaystyle -2(\lambda+2\nu)\widetilde{\rho}_0^{-4}(V_n-\widetilde{\mathbf{u}}_0\cdot \mathbf{n})|\partial_\xi \widetilde{\rho}_0|^2=\big(\frac{1}{2}|\partial_\xi \widetilde{\phi}_0|^2-\frac14 (\tilde{\phi}_0^2-1)^2\big)\frac{\partial}{\partial \xi}(\frac{1}{\widetilde{\rho}_0^2}).
\end{align}
Combining \eqref{lem3-2} with \eqref{uu-phi}, and integrating them from $-\infty$ to $+\infty$, we obtain
\begin{align}\label{uu-phi-integ}
\displaystyle \int_{-\infty}^{+\infty}(V_n-\widetilde{\mathbf{u}}_0\cdot \mathbf{n})\big(\chi_{M_\epsilon} |\partial_\xi \widetilde{\phi}_0|^2+2(\lambda+2\nu)\widetilde{\rho}_0^{-4}|\partial_\xi \widetilde{\rho}_0|^2\big)d\xi=0.
\end{align}
Since $\widetilde{\rho}_0>0$, $\lambda+2\nu>0$, and \eqref{chi-eta}, combining \eqref{uu-phi-integ} and \eqref{rho-u} we obtain \eqref{vn-u}.
Then from \eqref{phi-inner11} and \eqref{lem3-2}, for both (C1) and (C2), we obtain
\begin{align}\label{f-chi}
\displaystyle \widetilde{\mu}_0=0,\quad
\frac{1}{2}|\partial_\xi \widetilde{\phi}_0|^2= \frac14(\widetilde{\phi}_0^2-1)^2.
\end{align}
Then we have
\begin{align}\label{pxi-chi}
\displaystyle  \partial_\xi \widetilde{\phi}_0=\frac{1}{\sqrt{2}}(1- \widetilde{\phi}_0^2).
\end{align}
Noticing $\widetilde{\phi}_0(\mathbf{x},t,0)=0 $, we obtain \eqref{phi-xi}.
Since $ \lambda+2\nu>0 $, combining \eqref{uu-f-chi} and \eqref{f-chi}, we have
\begin{align}\label{uu-norm}
\displaystyle \partial_\xi \widetilde{\mathbf{u}}_0\cdot \mathbf{n}=0.
\end{align}
Multiplying equation \eqref{uu-inner2} by the tangential vector $ \mathbf{\tau} $, we infer that
\begin{align}
\displaystyle  \partial_\xi(\lambda\partial_\xi\widetilde{\mathbf{u}}_0)\cdot \mathbf{\tau}=0.
\end{align}
Since $ \lambda>0 $, by integration from $ -\infty $ to $ \xi $,  we have
\begin{align}\label{u-tau}
\displaystyle  \partial_\xi\widetilde{\mathbf{u}}_0\cdot \mathbf{\tau}=0.
\end{align}
By integrating equation \eqref{theta-inner11} from $ -\infty $ to $ \xi $,   combining with \eqref{uu-norm} and \eqref{f-chi}, we obtain
\begin{align}
\displaystyle \mathrm{k} \partial_\xi \widetilde{\theta}_0=0.
\end{align}
Since $ \mathrm{k}>0 $, combining \eqref{uu-norm} and \eqref{u-tau}, we derive \eqref{uuu}. The  proof is completed.
\end{proof}

\begin{remark}\label{rem:s1s2} For $(C1)~ M_\epsilon=1$, we have $V_{n}-\mathbf{u}_0 \cdot \mathbf{n}=0$ on $\Gamma$. For $(C2)~ M_\epsilon=\frac{1}{\epsilon}$ we introduce the set $S \subset \Gamma$  by
\begin{align}
\displaystyle \rho^+(\mathbf{x},t)=\rho^-(\mathbf{x},t), & \quad \text{on}~ S \times(-\infty,+\infty).
\end{align}
From \eqref{vn-u}, it implies that
\begin{align}
\displaystyle \rho^+(\mathbf{x},t)=\rho^-(\mathbf{x},t),\quad V_n-\mathbf{u}_0 \cdot \mathbf{n} = 0, & \quad \text{on}~ \Gamma\setminus S \times(-\infty,+\infty).
\end{align}
\end{remark}

{\it First approximation.~~}
We derive the system from the first order asymptotic analysis by the equations \eqref{rho-inner2}, \eqref{uu-inner2}, \eqref{mu-inner2} and \eqref{theta-inner2}. By \eqref{uuu}, the equations are simplified to the following system:
\begin{align}
&\partial_t \widetilde{\rho}_0+\textrm{div}(\widetilde{\rho}_0 \widetilde{\mathbf{u}}_0)-
\partial_\xi\widetilde{\rho}_1 (V_n-\widetilde{\mathbf{u}}_0\cdot \mathbf{n})+ \partial_{\xi}(\widetilde{\rho}_0 \widetilde{\mathbf{u}}_1 \cdot \mathbf{n})=0, \label{rho-inner22}\\
&-\partial_\xi p_0 \mathbf{n}+\frac14\nabla (\tilde{\phi}_0^2-1)^2+\partial_\xi ((\tilde{\phi}_0^3-\widetilde{\phi}_0)\widetilde{\phi}_1)\mathbf{n}+\partial_\xi((\nu+\lambda)\partial_\xi\hat{\mathbf{u}}_1\cdot \mathbf{n})\mathbf{n}+ \partial_\xi(\nu\partial_\xi\hat{\mathbf{u}}_1)\nonumber\\
&~~~\qquad=\partial_\xi\big(\partial_\xi \widetilde{\phi}_0\partial_\xi \widetilde{\phi}_1\big)\mathbf{n}+\mathbf{n}\cdot\nabla|\partial_\xi \widetilde{\phi}_0|^2 \mathbf{n}+|\partial_\xi \widetilde{\phi}_0|^2 \kappa \mathbf{n}
+\partial_{\xi\xi}\widetilde{\phi}_0 \nabla\widetilde{\phi}_0, \label{uu-inner22}\\
&\widetilde{\rho}_0\widetilde{\mu}_1=-2(\mathbf{n}\cdot\nabla) \partial_\xi\widetilde{\phi}_0-\partial_\xi \widetilde{\phi}_0\kappa-\partial_{\xi\xi} \widetilde{\phi}_1+(3{\tilde{\phi}_0}^2-1)\widetilde{\phi}_1,\label{mu-inner22}\\
&\mathrm{k}\partial_{\xi\xi} \widetilde{\theta}_1=0,\label{theta-inner22}
\end{align}
and for (C2) $M_\epsilon=\frac{1}{\epsilon}$ we have
\begin{align}
\widetilde{\rho}_0\partial_\xi \widetilde{\phi}_0 (V_n-\widetilde{\mathbf{u}}_0 \cdot \mathbf{n})
=\widetilde{\mu}_1. \label{phi-inner22}
\end{align}

\begin{lemma}\label{lem-tensor}
It holds
\begin{align}\label{tensor-n}
\displaystyle [(\mathbb{S}_0-p_0\mathbb{I})\mathbf{n}]_\Gamma=\sigma \kappa \mathbf{n}, \quad \text{on}~ \Gamma,
\end{align}
where $ \mathbb{S}_0^{\pm}=2\nu\mathbb{D}\mathbf{u}_0^{\pm}+\lambda\mathrm{div}\mathbf{u}_0^{\pm}\mathbb{I}$, $p_0^\pm=(\rho_0^{\pm})^2e_\rho(\rho_0^{\pm},\theta_0^{\pm})+\theta_0^{\pm}p_\rho(\rho_0^{\pm},\theta_0^{\pm})$.
\end{lemma}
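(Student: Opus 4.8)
The plan is to derive \eqref{tensor-n} by integrating the first-order inner momentum balance \eqref{uu-inner22} across the layer, i.e.\ in $\xi$ over $(-\infty,+\infty)$, and then converting the boundary values at $\xi=\pm\infty$ into outer quantities via the matching relations \eqref{match1}--\eqref{match2}. The first step is to simplify \eqref{uu-inner22} using Lemma~\ref{lem-rho-u-phi}: since $\widetilde{\phi}_0=\tanh(\xi/\sqrt{2})$ depends on $\xi$ alone, one has $\nabla_{\mathbf{x}}\widetilde{\phi}_0=0$, $\nabla_{\mathbf{x}}|\partial_\xi\widetilde{\phi}_0|^2=0$ and $\nabla_{\mathbf{x}}(\widetilde{\phi}_0^2-1)^2=0$, so the three ``tangential'' terms $\tfrac14\nabla(\widetilde{\phi}_0^2-1)^2$, $\mathbf{n}\cdot\nabla|\partial_\xi\widetilde{\phi}_0|^2\,\mathbf{n}$ and $\partial_{\xi\xi}\widetilde{\phi}_0\,\nabla\widetilde{\phi}_0$ vanish identically, and \eqref{uu-inner22} reduces to a balance of pure $\xi$-derivatives together with the single curvature term $|\partial_\xi\widetilde{\phi}_0|^2\kappa\,\mathbf{n}$.

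Integrating the reduced identity in $\xi$, I would evaluate the surviving contributions one by one. The pressure term integrates to $-[p_0]_\Gamma\,\mathbf{n}$ with $p_0^\pm$ given by \eqref{pressure-outer}, by \eqref{match1}. The curvature term integrates to $\sigma\kappa\,\mathbf{n}$, where $\sigma=\int_{-\infty}^{+\infty}|\partial_\xi\widetilde{\phi}_0|^2\,d\xi$; inserting $\widetilde{\phi}_0=\tanh(\xi/\sqrt{2})$ identifies this with the surface tension constant of \eqref{sigma0}. The two Allen--Cahn coupling terms $\partial_\xi\big((\widetilde{\phi}_0^3-\widetilde{\phi}_0)\widetilde{\phi}_1\big)\mathbf{n}$ and $\partial_\xi\big(\partial_\xi\widetilde{\phi}_0\,\partial_\xi\widetilde{\phi}_1\big)\mathbf{n}$ contribute only their values at $\xi=\pm\infty$, and these vanish: $\widetilde{\phi}_0^3-\widetilde{\phi}_0$ and $\partial_\xi\widetilde{\phi}_0$ decay exponentially as $|\xi|\to\infty$, while $\widetilde{\phi}_1$ and $\partial_\xi\widetilde{\phi}_1$ remain bounded there (matching to the finite outer values), so both boundary values are zero.

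The step I expect to require the most care is the viscous group $\partial_\xi\big((\nu+\lambda)\partial_\xi\hat{\mathbf{u}}_1\cdot\mathbf{n}\big)\mathbf{n}+\partial_\xi\big(\nu\,\partial_\xi\hat{\mathbf{u}}_1\big)$, whose integral is $\big[(\nu+\lambda)(\partial_\xi\hat{\mathbf{u}}_1\cdot\mathbf{n})\mathbf{n}+\nu\,\partial_\xi\hat{\mathbf{u}}_1\big]_{\xi=-\infty}^{\xi=+\infty}$ and which must be shown to equal $[\mathbb{S}_0\mathbf{n}]_\Gamma$. For this I would use \eqref{match2} componentwise in $\mathbf{u}$, that is $\nabla_{\mathbf{x}}\widetilde{\mathbf{u}}_0+\partial_\xi\hat{\mathbf{u}}_1\otimes\mathbf{n}\to\nabla\mathbf{u}_0^\pm$ as $\xi\to\pm\infty$, combined with $\partial_\xi\widetilde{\mathbf{u}}_0=0$ from \eqref{uuu} (so that $\widetilde{\mathbf{u}}_0$ is $\xi$-independent, hence $[\mathbf{u}_0]_\Gamma=0$ and $\widetilde{\mathbf{u}}_0$ carries only tangential dependence on $\Gamma$). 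Writing $\mathbf{a}^\pm:=\lim_{\xi\to\pm\infty}\partial_\xi\hat{\mathbf{u}}_1$ and decomposing $\nabla\mathbf{u}_0^\pm=\nabla_\Gamma\widetilde{\mathbf{u}}_0+\mathbf{a}^\pm\otimes\mathbf{n}$, a short computation from $\mathbb{S}_0^\pm\mathbf{n}=2\nu\mathbb{D}\mathbf{u}_0^\pm\mathbf{n}+\lambda(\mathrm{div}\,\mathbf{u}_0^\pm)\mathbf{n}$ gives $\mathbb{S}_0^\pm\mathbf{n}=\nu\,\mathbf{a}^\pm+(\nu+\lambda)(\mathbf{a}^\pm\cdot\mathbf{n})\mathbf{n}+\nu(\nabla_\Gamma\widetilde{\mathbf{u}}_0)^\top\mathbf{n}+\lambda(\mathrm{div}_\Gamma\widetilde{\mathbf{u}}_0)\mathbf{n}$; the last two terms involve only $\widetilde{\mathbf{u}}_0$ and its tangential derivatives along $\Gamma$ and are therefore continuous across $\Gamma$, so subtracting the $\xi\to-\infty$ value from the $\xi\to+\infty$ value yields exactly $[\mathbb{S}_0\mathbf{n}]_\Gamma$.

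Assembling the four nonzero pieces, the integrated form of \eqref{uu-inner22} reads $-[p_0]_\Gamma\mathbf{n}+[\mathbb{S}_0\mathbf{n}]_\Gamma=\sigma\kappa\mathbf{n}$, which is \eqref{tensor-n} once one writes $[p_0]_\Gamma\mathbf{n}=[p_0\mathbb{I}\mathbf{n}]_\Gamma$. Apart from the viscous identification above, the only point that needs to be spelled out is the control of the $\widetilde{\phi}_1$- and $\hat{\mathbf{u}}_1$-boundary terms, namely the existence of the limits $\lim_{\xi\to\pm\infty}\partial_\xi\hat{\mathbf{u}}_1$ and the boundedness of $\widetilde{\phi}_1,\partial_\xi\widetilde{\phi}_1$ at infinity, which is where the matching ansatz and the exponential decay of $\partial_\xi\widetilde{\phi}_0$ enter; the remainder is routine bookkeeping.
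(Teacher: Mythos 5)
Your proposal is correct and follows essentially the same route as the paper: simplify \eqref{uu-inner22} using that $\widetilde{\phi}_0=\tanh(\xi/\sqrt{2})$ depends on $\xi$ alone, integrate across the layer so the exact $\xi$-derivatives of the $\widetilde{\phi}_1$-coupling terms drop out, identify the pressure and curvature contributions, and convert the viscous boundary values via the matching condition \eqref{match2} into $[\mathbb{S}_0\mathbf{n}]_\Gamma$. The only difference is that you spell out the decomposition $\nabla\mathbf{u}_0^\pm=\nabla_\Gamma\widetilde{\mathbf{u}}_0+\mathbf{a}^\pm\otimes\mathbf{n}$ behind the identification \eqref{uu-inner33}, which the paper states without detail.
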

\begin{proof}
Since $ \widetilde{\phi}_0 $ is given by \eqref{phi-xi}, then we simplify equation \eqref{uu-inner22} to
\begin{align}
&-\partial_\xi p_0 \mathbf{n}+\partial_\xi \big((\lambda+\nu)\partial_{\xi}\widetilde{\mathbf{u}}_1+\nu (\partial_\xi\widetilde{\mathbf{u}}_1\cdot \mathbf{n})\mathbf{n}\big)\nonumber\\
&~~~=\partial_\xi\big(\partial_\xi \widetilde{\phi}_0\partial_\xi \widetilde{\phi}_1\big)\mathbf{n}+|\partial_\xi \widetilde{\phi}_0|^2 \kappa \mathbf{n}
-\partial_\xi ((\tilde{\phi}_0^3-\widetilde{\phi}_0)\widetilde{\phi}_1)\mathbf{n}. \label{uu-inner3}
\end{align}
We integrate \eqref{uu-inner3} from $ \xi=-\infty $ to $ \xi=+\infty $, and the first and third summand on the right-hand-side vanish. The first summand on the left-hand-side of \eqref{uu-inner3} turns into
\begin{align}
\displaystyle  -\int_{-\infty}^{\infty}\partial_\xi p_0 \mathbf{n}d \xi=-(p(\rho_0^+,\theta_0^+)-p(\rho_0^-,\theta_0^-))\mathbf{n}=-[p(\rho_0,\theta_0)]\mathbf{n}.
\end{align}
From the matching condition \eqref{match1}-\eqref{match2}, we obtain
\begin{align}\label{newmatch}
\displaystyle \lim_{\xi \rightarrow \pm \infty} \partial_\xi \mathbf{u}_1=\mathbf{n}\cdot \nabla \mathbf{u}_0.
\end{align}
For the second summand on the left-hand-side of \eqref{uu-inner3} we have
\begin{align}
&\int_{-\infty}^{\infty}\partial_\xi\big(((\nu+\lambda)\partial_\xi\widetilde{\mathbf{u}}_1\cdot \mathbf{n})\mathbf{n}+ \nu\partial_\xi\widetilde{\mathbf{u}}_1\big)d \xi \nonumber\\
&= \big(((\nu+\lambda)\partial_\xi\widetilde{\mathbf{u}}_1\cdot \mathbf{n})\mathbf{n}+ \nu\partial_\xi\widetilde{\mathbf{u}}_1\big)|^{\xi=+\infty}_{\xi=-\infty}\nonumber\\
&=[\lambda\mathop{\mathrm{div}}\nolimits \widetilde{\mathbf{u}}_0 \mathbf{n} +\nu(\nabla \widetilde{\mathbf{u}}_0+ \nabla ^{\top} \widetilde{\mathbf{u}}_0)\mathbf{n}]\nonumber\\
&=[\mathbb{S}_0]\mathbf{n}.\label{uu-inner33}
\end{align}
Combining the equations above together we conclude
\begin{align}
\displaystyle -[p(\rho_0,\theta_0)\mathbf{n}]+[\mathbb{S}_0\mathbf{n}]=\int_{-\infty}^{\infty}
|\partial_\xi \widetilde{\phi}_0|^2 \kappa \mathbf{n}d \xi=\sigma \kappa \mathbf{n},
\end{align}
where
\begin{align}
 \displaystyle  \sigma= \int_{-\infty}^{\infty}
|\partial_\xi \widetilde{\phi}_0|^2d \xi=\frac{2}{3}\sqrt{2}, \label{sigma0}
 \end{align}
then it follows \eqref{tensor-n}.
\end{proof}

For $(C1)~M_\epsilon=1$, from Lemma \ref{lem-rho-u-phi} and Lemma \ref{lem-tensor}, we obtain the following jump conditions on the free interface $\Gamma$.
\begin{lemma}\label{c1-jump}
Letting $\epsilon \rightarrow 0$ in ansatz \eqref{inner-e} for $(C1)~M_\epsilon=1$, it holds
\begin{align}
 \displaystyle &[\mathbf{u}]_\Gamma=0,\quad [\theta]_\Gamma=0,& \text{on}~~\Gamma,\label{model-jump-u}\\
 \displaystyle &V_\mathbf{n}-\mathbf{u} \cdot \mathbf{n} =0,& \text{on}~~\Gamma,\label{RH}
 \\
 \displaystyle &\big[\big(2\nu\mathbb{D}\mathbf{u}+\lambda\mathrm{div}\mathbf{u}\mathbb{I}-\big( \rho^2\frac{\partial e}{\partial \rho}+\theta\frac{\partial p}{\partial \theta}\big) \mathbb{I}\big)\mathbf{n}\big]_\Gamma=\sigma \kappa \mathbf{n},& \text{on}~~\Gamma.
 \label{model-jump-t}
\end{align}
\end{lemma}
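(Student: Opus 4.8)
The plan is to assemble Lemma~\ref{lem-rho-u-phi} and Lemma~\ref{lem-tensor} together with the matching conditions \eqref{match1}--\eqref{match2}, and then to pass to the limit $\epsilon\to0$ in the leading-order coefficients of the outer ansatz \eqref{outer-e}.

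First I would establish the continuity of velocity and temperature. By \eqref{uuu} the leading inner profiles satisfy $\partial_\xi\widetilde{\mathbf{u}}_0=0$ and $\partial_\xi\widetilde{\theta}_0=0$, so $\widetilde{\mathbf{u}}_0$ and $\widetilde{\theta}_0$ are independent of $\xi$ throughout the transition layer. Combining this with the boundary conditions \eqref{in-bc-uu} and \eqref{in-bc-theta}, namely $\widetilde{\mathbf{u}}_0(\mathbf{x},t,\pm\infty)=\mathbf{u}_0^{\pm}(\mathbf{x},t)$ and $\widetilde{\theta}_0(\mathbf{x},t,\pm\infty)=\theta_0^{\pm}(\mathbf{x},t)$, one gets $\mathbf{u}_0^{+}=\mathbf{u}_0^{-}$ and $\theta_0^{+}=\theta_0^{-}$ on $\Gamma$; note that both the normal relation \eqref{uu-norm} and the tangential relation \eqref{u-tau} are used to conclude that $[\widetilde{\mathbf{u}}_0]=0$ as a full vector. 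Letting $\epsilon\to0$ in \eqref{outer-e} then yields $[\mathbf{u}]_\Gamma=0$ and $[\theta]_\Gamma=0$, which is \eqref{model-jump-u}.

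Next, for the kinematic condition, I would invoke the case $(C1)$ line of \eqref{vn-u}, which gives $V_\mathbf{n}-\widetilde{\mathbf{u}}_0\cdot\mathbf{n}=0$ inside the inner region; since $\widetilde{\mathbf{u}}_0$ is $\xi$-independent with boundary value $\mathbf{u}_0^{\pm}$, letting $\xi\to\pm\infty$ and then $\epsilon\to0$ produces $V_\mathbf{n}-\mathbf{u}\cdot\mathbf{n}=0$ on $\Gamma$ consistently from both sides, i.e. \eqref{RH}. For the stress jump I would simply quote \eqref{tensor-n} of Lemma~\ref{lem-tensor}: recalling $\mathbb{S}_0^{\pm}=2\nu\mathbb{D}\mathbf{u}_0^{\pm}+\lambda\mathrm{div}\mathbf{u}_0^{\pm}\mathbb{I}$ and $p_0^{\pm}=(\rho_0^{\pm})^2 e_\rho(\rho_0^{\pm},\theta_0^{\pm})+\theta_0^{\pm}p_\theta(\rho_0^{\pm},\theta_0^{\pm})$, one has $[(\mathbb{S}_0-p_0\mathbb{I})\mathbf{n}]_\Gamma=\sigma\kappa\mathbf{n}$; passing to the limit and identifying $p_0^{\pm}$ with $\rho^2\frac{\partial e}{\partial\rho}+\theta\frac{\partial p}{\partial\theta}$ evaluated at the traces $(\rho^{\pm},\theta^{\pm})$ gives \eqref{model-jump-t}.

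The difficulty here is bookkeeping rather than analysis: everything rests on Lemmas~\ref{lem-rho-u-phi} and \ref{lem-tensor}, and the only delicate point is to apply the matching conditions so that the inner traces at $\xi=\pm\infty$ are correctly identified with the outer boundary data before the limit $\epsilon\to0$ is taken. No new estimates beyond the two preceding lemmas are required.
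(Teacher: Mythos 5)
Your proposal is correct and follows exactly the paper's route: the paper derives Lemma \ref{c1-jump} precisely by combining Lemma \ref{lem-rho-u-phi} (items \eqref{uuu} and the $(C1)$ line of \eqref{vn-u}, together with the matching conditions) with the stress-jump relation \eqref{tensor-n} of Lemma \ref{lem-tensor}. Your write-up merely makes explicit the bookkeeping that the paper leaves implicit, so there is nothing to correct.
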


For $(C2)~M_\epsilon=\frac{1}{\epsilon}$, we derive the equation \eqref{phi-inner22} to obtain the following lemma.
\begin{lemma}\label{lem-c1c2}
It holds
\begin{align}
\widetilde{\rho}_0(V_\mathbf{n}-{\mathbf{u}}_0 \cdot \mathbf{n})\int_{-\infty}^{+\infty} \widetilde{\rho}_0 |\partial_\xi \widetilde{\phi}_0|^2 d \xi+   \sigma \kappa=0. \label{vv-rho-s}
\end{align}
\end{lemma}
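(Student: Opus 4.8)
The goal of Lemma \ref{lem-c1c2} is to extract from the first-order phase-field balance \eqref{phi-inner22} a scalar identity relating the mass flux $\widetilde{\rho}_0(V_\mathbf{n}-\widetilde{\mathbf{u}}_0\cdot\mathbf{n})$ across the interface to the surface tension $\sigma$ and the curvature $\kappa$. The strategy parallels the derivation of the surface-tension jump in Lemma \ref{lem-tensor}: multiply the relevant first-order equation by $\partial_\xi\widetilde\phi_0$, use the zeroth-order profile \eqref{phi-xi} together with \eqref{f-chi} and \eqref{pxi-chi}, and integrate over $\xi\in(-\infty,+\infty)$ so that total $\xi$-derivatives collapse to boundary terms that either vanish (by the matching conditions and the exponential decay of $\partial_\xi\widetilde\phi_0$) or produce the desired jump quantities.

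\textbf{Step 1.} Start from \eqref{phi-inner22}, i.e. $\widetilde{\rho}_0\partial_\xi\widetilde\phi_0(V_\mathbf{n}-\widetilde{\mathbf{u}}_0\cdot\mathbf{n})=\widetilde\mu_1$. By Lemma \ref{lem-rho-u-phi}, in case (C2) we already know $\partial_\xi\widetilde{\mathbf{u}}_0=0$, so $\widetilde{\mathbf{u}}_0\cdot\mathbf{n}$ is $\xi$-independent; moreover \eqref{rho-u} gives that $\widetilde\rho_0(V_\mathbf{n}-\widetilde{\mathbf{u}}_0\cdot\mathbf{n})$ equals the constant (in $\xi$) boundary value $\rho^\pm(V_\mathbf{n}-\mathbf{u}_0^\pm\cdot\mathbf{n})$ — though one must be careful that on $\Gamma\setminus S$ this flux is zero and the identity \eqref{vv-rho-s} holds trivially, while on $S$ we have $[\rho]_\Gamma=0$ so the flux is genuinely one-signed and common to both sides.

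\textbf{Step 2.} Substitute the expression for $\widetilde\mu_1$ from \eqref{mu-inner22} (equivalently \eqref{mu-inner2} with $\widetilde\mu_0=0$), namely $\widetilde{\rho}_0\widetilde\mu_1=-2(\mathbf{n}\cdot\nabla)\partial_\xi\widetilde\phi_0-\partial_\xi\widetilde\phi_0\,\kappa-\partial_{\xi\xi}\widetilde\phi_1+(3\widetilde\phi_0^2-1)\widetilde\phi_1$. Multiplying \eqref{phi-inner22} through by $\widetilde{\rho}_0$ and by $\partial_\xi\widetilde\phi_0$, one obtains $\widetilde{\rho}_0^2\,(V_\mathbf{n}-\widetilde{\mathbf{u}}_0\cdot\mathbf{n})\,|\partial_\xi\widetilde\phi_0|^2 = \partial_\xi\widetilde\phi_0\big(-2(\mathbf{n}\cdot\nabla)\partial_\xi\widetilde\phi_0-\partial_\xi\widetilde\phi_0\,\kappa-\partial_{\xi\xi}\widetilde\phi_1+(3\widetilde\phi_0^2-1)\widetilde\phi_1\big)$. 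Now integrate in $\xi$ over $(-\infty,+\infty)$. On the left, $\widetilde\rho_0^2(V_\mathbf{n}-\widetilde{\mathbf{u}}_0\cdot\mathbf{n})=\widetilde\rho_0\cdot[\widetilde\rho_0(V_\mathbf{n}-\widetilde{\mathbf{u}}_0\cdot\mathbf{n})]$, where the bracketed factor is $\xi$-independent, so it pulls out as $\widetilde{\rho}_0(V_\mathbf{n}-{\mathbf{u}}_0\cdot\mathbf{n})\int_{-\infty}^{+\infty}\widetilde\rho_0|\partial_\xi\widetilde\phi_0|^2\,d\xi$, matching the first term of \eqref{vv-rho-s}. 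On the right, the term $-\int\partial_\xi\widetilde\phi_0\,\partial_\xi\widetilde\phi_0\,\kappa\,d\xi=-\kappa\int|\partial_\xi\widetilde\phi_0|^2\,d\xi=-\sigma\kappa$ by \eqref{sigma0}; the cross term $-2\int\partial_\xi\widetilde\phi_0\,(\mathbf{n}\cdot\nabla)\partial_\xi\widetilde\phi_0\,d\xi=-\int(\mathbf{n}\cdot\nabla)|\partial_\xi\widetilde\phi_0|^2\,d\xi$ vanishes since $\partial_\xi\widetilde\phi_0$ depends on $\xi$ only through the explicit tanh-profile \eqref{phi-xi} (so $\mathbf{n}\cdot\nabla$ of it is zero, or at worst integrates to a total derivative with vanishing endpoints); and the two terms involving $\widetilde\phi_1$, namely $\int\partial_\xi\widetilde\phi_0\big((3\widetilde\phi_0^2-1)\widetilde\phi_1-\partial_{\xi\xi}\widetilde\phi_1\big)\,d\xi$, constitute — after one integration by parts in the second piece — $\int\big((3\widetilde\phi_0^2-1)\partial_\xi\widetilde\phi_0-\partial_{\xi\xi\xi}\widetilde\phi_0\big)\widetilde\phi_1\,d\xi$, which is $\int\partial_\xi\big((\widetilde\phi_0^3-\widetilde\phi_0)-\partial_{\xi\xi}\widetilde\phi_0\big)\widetilde\phi_1\,d\xi$; since $\partial_{\xi\xi}\widetilde\phi_0=\widetilde\phi_0^3-\widetilde\phi_0$ by \eqref{mu-inner11} with $\widetilde\mu_0=0$, the integrand vanishes identically. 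Collecting, $\widetilde{\rho}_0(V_\mathbf{n}-{\mathbf{u}}_0\cdot\mathbf{n})\int_{-\infty}^{+\infty}\widetilde\rho_0|\partial_\xi\widetilde\phi_0|^2\,d\xi=-\sigma\kappa$, which is \eqref{vv-rho-s}.

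\textbf{Main obstacle.} The delicate point is the treatment of the $\widetilde\phi_1$-terms and the $(\mathbf{n}\cdot\nabla)$ cross term: one must justify that the boundary contributions at $\xi\to\pm\infty$ really vanish. This relies on $\partial_\xi\widetilde\phi_0\sim(1-\tanh^2)$ decaying exponentially, and on $\widetilde\phi_1$ (and its $\xi$-derivatives) remaining at most polynomially bounded — which is the standard solvability/decay hypothesis in matched asymptotics but is not proved rigorously here. A secondary subtlety is bookkeeping the case split $S$ versus $\Gamma\setminus S$: on $\Gamma\setminus S$ the left side of \eqref{vv-rho-s} vanishes because $V_\mathbf{n}-\mathbf{u}_0\cdot\mathbf{n}=0$ (Remark \ref{rem:s1s2}), forcing $\sigma\kappa=0$ hence $\kappa=0$ there, consistent with \eqref{jump condition-case-2}; on $S$, $[\rho]_\Gamma=0$ makes $\widetilde{\rho}_0(V_\mathbf{n}-\mathbf{u}_0\cdot\mathbf{n})$ unambiguous and the identity becomes the genuine phase-transition condition. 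One should state explicitly which regime \eqref{vv-rho-s} is being asserted in, or note that it holds on all of $\Gamma$ with both sides interpreted via the common limiting values.
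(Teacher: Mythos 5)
Your proposal is correct and follows essentially the same route as the paper: combine \eqref{phi-inner22} with \eqref{mu-inner22}, multiply by $\partial_\xi\widetilde\phi_0$, integrate over $\xi$, pull out the $\xi$-independent flux $\widetilde\rho_0(V_\mathbf{n}-\widetilde{\mathbf{u}}_0\cdot\mathbf{n})$ via \eqref{rho-u}, and kill the $\widetilde\phi_1$-terms by integration by parts using $\partial_{\xi\xi}\widetilde\phi_0=\widetilde\phi_0^3-\widetilde\phi_0$. The only differences are cosmetic (you integrate by parts onto $\widetilde\phi_1$ rather than $\partial_\xi\widetilde\phi_1$, and you spell out the decay and the $S$ versus $\Gamma\setminus S$ bookkeeping that the paper leaves implicit).
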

\begin{proof}
Combining \eqref{mu-inner22} and \eqref{phi-inner22}, we have
\begin{align}
\displaystyle  -\widetilde{\rho}_0^2(V_\mathbf{n}-{\mathbf{u}}_0 \cdot \mathbf{n}) \partial_\xi \widetilde{\phi}_0
=2(\mathbf{n}\cdot\nabla) \partial_\xi\widetilde{\phi}_0+\partial_\xi \widetilde{\phi}_0\kappa+\partial_{\xi\xi} \widetilde{\phi}_1-(3\tilde{\phi}_0^2-1) \widetilde{\phi}_1,\label{1app-phi0}
\end{align}
multiplying \eqref{1app-phi0}  by $ \partial_\xi \widetilde{\phi}_0 $ and integrating from $ \xi=-\infty $ to $ \xi=+\infty $ , we obtain
\begin{eqnarray}
&&-\int_{-\infty}^{+\infty}\widetilde{\rho}_0^2(V_n-{\mathbf{u}}_0 \cdot \mathbf{n}) |\partial_\xi \widetilde{\phi}_0|^2 d \xi\notag\\
&&=\int_{-\infty}^{+\infty}  |\partial_\xi \widetilde{\phi}_0|^2\kappa+\partial_{\xi\xi} \widetilde{\phi}_1\partial_\xi \widetilde{\phi}_0-(3\tilde{\phi}_0^2-1) \widetilde{\phi}_1\partial_\xi \widetilde{\phi}_0 d\xi.
\label{1app-phi}
\end{eqnarray}
From \eqref{mu-inner1}, \eqref{sigma0} and \eqref{rho-u}, we have
\begin{align}
 &-\widetilde{\rho}_0(V_\mathbf{n}-{\mathbf{u}}_0 \cdot \mathbf{n})\int_{-\infty}^{+\infty} \widetilde{\rho}_0 |\partial_\xi \widetilde{\phi}_0|^2 d \xi \nonumber\\
  =&  \int_{-\infty}^{+\infty}|\partial_\xi \widetilde{\phi}_0|^2d \xi\kappa+  \int_{-\infty}^{+\infty}\big(\partial_{\xi\xi}\widetilde{\phi}_1\partial_\xi \widetilde{\phi}_0-(3\tilde{\phi}_0^2-1)\partial_\xi \widetilde{\phi}_0 \widetilde{\phi}_1\big) d \xi\nonumber\\
 =&  \sigma \kappa-  \int_{-\infty}^{+\infty}\partial_{\xi}\widetilde{\phi}_1\big(-\partial_{\xi\xi} \widetilde{\phi}_0+(\tilde{\phi}_0^3-\widetilde{\phi}_0)\big) d \xi =\sigma \kappa,  \label{phi-inner33}
\end{align}
that is \eqref{vv-rho-s}.
\end{proof}
For $(C2)~M_\epsilon=\frac{1}{\epsilon}$, from Lemma \ref{lem-rho-u-phi}, Lemma \ref{lem-tensor}, Lemma \ref{lem-c1c2} and Remark \ref{rem:s1s2}, we obtain the following jump conditions on the free interface $\Gamma$.
\begin{lemma}\label{c2-jump}
Letting $\epsilon \rightarrow 0$ in ansatz \eqref{inner-e} for $(C2)~M_\epsilon=\frac{1}{\epsilon}$, it holds
\begin{equation}\label{c2-jump1}
 \left\{\begin{array}{llll}
 \displaystyle [\mathbf{u}]_\Gamma=0,\quad [\theta]_\Gamma=0,\quad [\rho]_\Gamma=0,\\
 \displaystyle \big[\big(2\nu\mathbb{D}\mathbf{u}+\lambda\mathrm{div}\mathbf{u}\mathbb{I}-\big( \rho^2\frac{\partial e}{\partial \rho}+\theta\frac{\partial p}{\partial \theta}\big)\mathbb{I}\big)\mathbf{n}\big]_\Gamma=\sigma \kappa \mathbf{n},\\
 \displaystyle \rho^2(V_\mathbf{n}-\mathbf{u} \cdot \mathbf{n}) =- \kappa,
\end{array}\right.  \text{on}~~S,
\end{equation}
\end{lemma}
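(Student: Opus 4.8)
My plan is to obtain \eqref{c2-jump1} by assembling the three preceding lemmas of this subsection together with Remark \ref{rem:s1s2}, treating the continuity relations, the stress jump, and the kinetic relation on $S$ one at a time.

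The continuity relations come for free from the zeroth-order analysis. Equation \eqref{uuu} of Lemma \ref{lem-rho-u-phi} gives $\partial_\xi\widetilde{\mathbf u}_0\equiv 0$ and $\partial_\xi\widetilde\theta_0\equiv 0$ on all of $\Gamma$, so the inner profiles $\widetilde{\mathbf u}_0,\widetilde\theta_0$ do not depend on $\xi$; sending $\xi\to\pm\infty$ and using the boundary/matching conditions \eqref{in-bc-uu}, \eqref{in-bc-theta} gives $\mathbf u_0^+=\widetilde{\mathbf u}_0=\mathbf u_0^-$ and $\theta_0^+=\widetilde\theta_0=\theta_0^-$, that is $[\mathbf u]_\Gamma=[\theta]_\Gamma=0$, while $[\rho]_\Gamma=0$ on $S$ is immediate from the defining property \eqref{S} of $S$. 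The stress-tensor jump is nothing but Lemma \ref{lem-tensor}, valid on all of $\Gamma$ and hence on $S\subset\Gamma$: inserting $\mathbb S_0^\pm=2\nu\mathbb D\mathbf u_0^\pm+\lambda\,\mathrm{div}\,\mathbf u_0^\pm\,\mathbb I$ and $p_0^\pm=(\rho_0^\pm)^2e_\rho+\theta_0^\pm p_\theta$ rewrites $[(\mathbb S_0-p_0\mathbb I)\mathbf n]_\Gamma=\sigma\kappa\mathbf n$ as the second line of \eqref{c2-jump1}.

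The only step with content is the kinetic relation $\rho^2(V_{\mathbf n}-\mathbf u\cdot\mathbf n)=-\kappa$ on $S$, which I would extract from \eqref{vv-rho-s} of Lemma \ref{lem-c1c2}. The task is to replace the weighted integral $\int_{-\infty}^{+\infty}\widetilde\rho_0|\partial_\xi\widetilde\phi_0|^2\,d\xi$ appearing there by $\rho_0\sigma$, with $\rho_0:=\rho_0^+=\rho_0^-$ on $S$ and $\sigma=\int_{-\infty}^{+\infty}|\partial_\xi\widetilde\phi_0|^2\,d\xi$ from \eqref{sigma0}. By \eqref{rho-u} the prefactor $\widetilde\rho_0(V_{\mathbf n}-\widetilde{\mathbf u}_0\cdot\mathbf n)$ is independent of $\xi$ and equals $\rho_0(V_{\mathbf n}-\mathbf u_0\cdot\mathbf n)$ on $S$; and the case-(C2) identity \eqref{vn-u}, $\partial_\xi\widetilde\rho_0\,(V_{\mathbf n}-\widetilde{\mathbf u}_0\cdot\mathbf n)=0$, forces at each point of $S$ either $V_{\mathbf n}-\mathbf u_0\cdot\mathbf n=0$ — in which case \eqref{vv-rho-s} yields $\sigma\kappa=0$, hence $\kappa=0$, and both sides of the asserted identity vanish — or $\partial_\xi\widetilde\rho_0\equiv 0$, so $\widetilde\rho_0\equiv\rho_0$ across the layer and \eqref{vv-rho-s} reduces to $\rho_0^2(V_{\mathbf n}-\mathbf u_0\cdot\mathbf n)\sigma+\sigma\kappa=0$, which is the claim; Remark \ref{rem:s1s2} then records that off $S$ one has instead $V_{\mathbf n}-\mathbf u_0\cdot\mathbf n=0$, in agreement with Theorem \ref{thm-main}. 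I expect this last reduction to be the one delicate point: the equality $\rho_0^+=\rho_0^-$ alone does not force the inner density profile to be constant, and it is precisely \eqref{vn-u} — which has no analogue in case (C1) — that rules out a nonconstant $\widetilde\rho_0$ whenever a genuine phase transition $V_{\mathbf n}-\mathbf u\cdot\mathbf n\neq0$ occurs, the complementary case being trivial.
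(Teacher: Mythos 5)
Your proof is correct and follows essentially the same route as the paper, which simply assembles Lemmas \ref{lem-rho-u-phi}, \ref{lem-tensor}, \ref{lem-c1c2} and Remark \ref{rem:s1s2} without further comment. In fact you supply the one detail the paper leaves implicit: using the constancy in $\xi$ of $\widetilde{\rho}_0(V_{\mathbf{n}}-\widetilde{\mathbf{u}}_0\cdot\mathbf{n})$ from \eqref{rho-u} together with \eqref{vn-u} to justify replacing $\int_{-\infty}^{+\infty}\widetilde{\rho}_0|\partial_\xi\widetilde{\phi}_0|^2\,d\xi$ by $\rho_0\sigma$ in \eqref{vv-rho-s} (or to reduce to the trivial case $\kappa=0$), which is exactly the right way to close the argument.
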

\begin{equation}\label{c2-jump2}
 \left\{\begin{array}{llll}
 \displaystyle [\mathbf{u}]_\Gamma=0,\quad [\theta]_\Gamma=0,\\
 \displaystyle V_\mathbf{n}-\mathbf{u} \cdot \mathbf{n} =0,
 \\
 \displaystyle \big[\big(2\nu\mathbb{D}\mathbf{u}+\lambda\mathrm{div}\mathbf{u}\mathbb{I}-\big( \rho^2\frac{\partial e}{\partial \rho}+\theta\frac{\partial p}{\partial \theta}\big)\mathbb{I}\big)\mathbf{n}\big]_\Gamma=0,
\end{array}\right. \text{on}~~\Gamma \setminus S.
\end{equation}

{\it Proof of Theorem \ref{thm-main}.~~}
We analyze the limit process by the matched asymptotic expansion. By applying the results of Lemma \ref{lem-outer}, Lemma \ref{c1-jump} and Lemma \ref{c2-jump}, the system \eqref{NSAC} converges to the sharp interface problem \eqref{free boundary problem for NSAC}  with the free interface \eqref{jump condition-case-1}-\eqref{jump condition-case-2}.

\end{document}